\DeclareMathOperator*{\find}{find}
\newcommand{\R}{\mathbb{R}} 
\newcommand{\K}{\mathbb{K}} 
\newcommand{\N}{\mathbb{N}}
\newcommand{\Lie}{\mathcal{L}} 
\newcommand{\ks}{\mathcal{K}}
\newcommand{\A}{\mathcal{A}}
\newcommand{\psd}{\mathbb{S}}
\newcommand{\xs}{\mathcal{X}}
\newcommand{\ys}{\mathcal{Y}}
\DeclarePairedDelimiter{\abs}{\lvert}{\rvert}
\DeclarePairedDelimiter{\norm}{\lVert}{\rVert}
\DeclarePairedDelimiterX{\inp}[2]{\langle}{\rangle}{#1, #2}
\DeclarePairedDelimiter{\supp}{\textrm{supp}(}{)}
\DeclarePairedDelimiter{\Mp}{\mathcal{M}_+(}{)}
\newtheorem{thm}{Theorem}[section]
\newtheorem{lem}[thm]{Lemma}
\newtheorem{prop}[thm]{Proposition}
\newtheorem{cor}{Corollary}
\newtheorem{rem}{Remark}
\title{\bf Algebraic Proofs of Path Disconnectedness \\ using Time-Dependent Barrier Functions}
\author{
Didier Henrion\thanks{LAAS-CNRS, Universit\'e de Toulouse, CNRS, Toulouse, France;
Faculty of Electrical Engineering, Czech Technical University in Prague, Czech Republic. (henrion@laas.fr)} \and Jared Miller\thanks{Automatic Control Laboratory (IfA), Department of Information Technology and Electrical Engineering (D-ITET), ETH Z\"{u}rich, Physikstrasse 3, 8092, Z\"{u}rich, Switzerland (e-mail: jarmiller@control.ee.ethz.ch)} \and
Mohab Safey El Din\thanks{
Sorbonne Universit\'e,
LIP6, UMR CNRS 7606,
PolSys Team, Paris, France. (mohab.Safey@lip6.fr)}}
\date{\today}
\begin{document}

\maketitle

\begin{abstract}
    
Two subsets of a given set are path-disconnected if they lie in different connected components of the larger set. Verification of path-disconnectedness is essential in proving the infeasibility of motion planning and trajectory optimization algorithms. We formulate path-disconnectedness as the infeasibility of a single-integrator control task to move between an initial set and a target set in a sufficiently long time horizon. This control-infeasibility task is certified through the generation of a time-dependent barrier function that separates the initial and final sets. The existence of  a time-dependent barrier function is a necessary and sufficient condition for path-disconnectedness under compactness conditions.
Numerically, the search for a polynomial barrier function is formulated using the moment-sum-of-squares hierarchy of semidefinite programs. The barrier function proves path-disconnectedness at a sufficiently large polynomial degree. The computational complexity of these semidefinite programs can be reduced by elimination of the control variables. Disconnectedness proofs are synthesized for example systems.
\end{abstract}


\section{Introduction}
\label{sec:introduction}


Let $X_0$ and $X_1$ be compact sets included in a compact set $X$ of $\R^n$. The sets $X_0$ and $X_1$ are path-connected inside $X$ if there exists a pair of points $x_0 \in X_0, \ x_1 \in X_1$ and a continuous 
function $x: [0, 1] \rightarrow X$ such that $x(0) = x_0$ and $x(1) = x_1$. The sets $X_0$ and $X_1$ are path-disconnected in $X$ if there does not exist such a function $x$. Equivalently, the sets $X_0$ and $X_1$ are path-disconnected if they lie in different connected components of $X$.

Deciding whether $X_0$ and $X_1$ are path-connected in $X$ is a core problem for
motion planning.  When $X_0$, $X_1$ and $X$ are \emph{semi-algebraic sets} (i.e.
defined by disjunctions of conjunctions of polynomial inequalities with real
coefficients), this problem boils down to the \emph{roadmap problem}. It
consists in computing a certificate of path-connectedness, hence a curve, that
would connect, in $X$, one point in $X_0$ to one point in $X_1$. Such a problem
has attracted much attention since the pioneering work of Canny~\cite{Canny88}
who showed that such certificates can be computed in time polynomial in the
maximum degree of the polynomial constraints and exponential in $n^2$. This
research track is still active with exciting complexity improvements, reducing
the dependancy on $n$, see e.g. \cite{SaSc17, PreSaSc24, presasc-preprint}.
When $X_0$ and $X_1$ cannot be path-connected in $X$, these algorithms will just
return a curve in $X$ that does not connect $X_0$ to $X_1$. The user is expected
to trust the algorithm and its implementation. Hence note that these algorithms
do not provide a certificate that can be checked a posteriori in the
disconnected case. 

{While computer algebra algorithms provide an exact solution to the connectedness query, they are computationally expensive. In engineering applications, numerical probabilistic and sampling-based motion planning algorithms are generally preferred for their more favorable running time, see e.g. \cite[Chapter 5]{Lav06}. If $X_0$ and $X_1$ are path-connected, these algorithms generate a connecting curve with probability one. If $X_0$ and $X_1$ are not path-connected, these algorithms do not terminate, and no certificate of disconnectedness is returned.}


The work in
\cite{delanoue2006using} uses interval analysis and iterative refinement to
determine if a set is path-connected or path-disconnected. However, this method
requires full-dimensional sets in order to provide certification, and is unable
to handle equality-constraint-generated submanifolds.
{Similarly to other motion planning algorithms, interval arithmetic algorithms do not provide certificates of disconnectedness that can be verified independently by a third party.}

The key idea behind our approach consists of interpreting the search of a path connecting $X_0$ and $X_1$ as \iac{OCP} under state and control constraints. Following \cite{lewis1980relaxation}, this \ac{OCP} is then reformulated as an infinite-dimensional \ac{LP} in cones of positive measures called occupation measures. Disconnectedness then amounts to infeasibility of the measure \ac{LP}, and this can be certified by a Farkas vector solving a dual linear problem in cones of positive continuous functions \cite{barvinok2002convex}. 
This Farkas vector may be interpreted as a time-dependent barrier function which is strictly positive on $X_0$, non-positive on $X_1$, and increases along all possible controlled trajectories, as studied previously in \cite{prajna2004safety,prajna2005necessity}.
Under our compactness assumptions, the existence of a barrier function is necessary and sufficient for proving path-disconnectedness, though time-dependence of the barrier function is required due to the failure of the Slater condition (i.e. existence of an interior point in the dual LP) which is assumed in \cite{prajna2005necessity} for time-independent barrier functions. 

The infinite-dimensional primal-dual LPs can be solved numerically by a hierarchy of convex Moment-\ac{SOS} \acp{SDP} of increasing size ruled by the degree of the polynomial barrier function \cite{henrion2008nonlinear, lasserre2009moments}.






This paper is laid out as follows:
Section \ref{sec:preliminaries} reviews preliminaries such as notation, the infinite-dimensional Farkas lemma, barrier functions, and occupation measures. Section \ref{sec:path_connected} poses the path-connectedness program as a feasibility \ac{LP} in occupation measures. Section \ref{sec:path_disconnected} forms a functional \ac{LP} that certifies path-disconnectedness through the existence of a time-dependent barrier function. Section \ref{sec:box} reduces the complexity of finding this barrier function by eliminating the control variables. Section \ref{sec:sos} applies the moment-\ac{SOS} hierarchy to find these barrier functions. Section \ref{sec:examples} demonstrates our algorithm on proving path-disconnected of example systems. Section \ref{sec:conclusion} concludes the paper. Appendix \ref{app:strict} certifies that the barrier function program may be expressed using strict inequalities without introduction of conservatism. Appendix \ref{app:poly_approx} proves that the time-dependent barrier function certificates may be polynomials.
\section{Preliminaries}
\label{sec:preliminaries}

\subsection{Acronyms/Initialisms}

\begin{acronym}[SOS]
\acro{BSA}{Basic Semialgebraic}

\acro{LMI}{Linear Matrix Inequality}
\acroplural{LMI}[LMIs]{Linear Matrix Inequalities}
\acroindefinite{LMI}{an}{a}

\acro{LP}{Linear Program}
\acroindefinite{LP}{an}{a}

\acro{OCP}{Optimal Control Problem}
\acroindefinite{OCP}{an}{an}


\acro{PD}{Positive Definite}

\acro{PSD}{Positive Semidefinite}

\acro{SDP}{Semidefinite Program}
\acroindefinite{SDP}{an}{a}

\acro{SOS}{Sum-of-Squares}
\acroindefinite{SOS}{an}{a}

\acro{WSOS}{Weighted Sum-of-Squares}


\end{acronym}

\subsection{Notation}

$\R[x]$ is the ring of polynomials with vector indeterminates $x$, and $\R_{\leq d}[x]$ is the vector space of polynomials with total degree at most $d$. A monomial in 
$\R[x]$ may be expressed in multi-index notation as $x^\alpha = \prod_i x_i^{\alpha_i}$ for an exponent $\alpha \in \N^n$. The degree of a monomial is $\abs{\alpha} = \sum_i \alpha_i$, and the degree of a polynomial is the maximum degree of all of its monomials.
A basic semialgebraic is a set formed by a countable number of polynomial inequality constraints of bounded degree. 

Let $X \subset \R^n$ be a compact set. The set of continuous functions on $X$ is $C(X)$, and $C_+(X)$ is the subcone of nonnegative continuous functions. An indicator function $I_A(x)$ for $A \subseteq X$ is a function that takes on the value $1$ is $x \in A$ and the value $0$ if $x \not\in A$.
$C^1(X)$ is the set of functions that possess continuous first derivatives. The topological dual to a space $\xs$ is denoted $\xs^*$. Given two elements $f \in \xs$ and $\mu \in \xs^*$, the duality pairing $\inp{f}{\mu}$ is a bilinear map from $\xs \times \xs^*$ to $\R$. An example is $\xs=C(X)$, with dual $\xs^*={\mathcal M}(X)$, the set of signed Borel measures on $X$. The duality pairing between elements $f \in C(X)$ and $\mu \in {\mathcal M}(X)$ is then $\inp{f}{\mu} = \int_X f(x) d \mu$ by Lebesgue integration. The cone $C_+(X)$ of non-negative continuous functions and the cone $ \Mp{X}$ of non-negative Borel measures are dual cones with respect to this pairing. Given a cone $\ks \in \xs$,  its dual cone $\ks^*$ is the set of continuous linear functionals that are non-negative on $\ks$, i.e. $\ks^* = \{v \in \xs^* \mid \inp{v}{x} \geq 0, \ \forall x \in \ks\}$.

The mass (volume) of a measure is $\mu(X) = \inp{1}{\mu} = \int_X d \mu$. Measures with mass one are called probability measures.
The support of a measure $S=\supp{\mu}$ is the smallest closed subset $S \subseteq X$ such that $\mu(X \setminus S) = 0$. 
The Dirac delta $\delta_{x}$ is a probability measure such that $\inp{f}{\delta_{x}} = f(x), \ \forall f \in C(X)$. 

For a linear operator $\A$, the adjoint operator $\A^\dagger$ is defined as the unique operator such that $\inp{\A f}{\mu} = \inp{f} {\A^\dagger \mu}$ for all choices of $f \in C(X), \ \mu \in \Mp{X}$.

Given a domain $\Omega$, the $C^0$ norm of a function $f$ is $\norm{f}_{C^0(\Omega)} = \sup_{x \in \Omega} \abs{f(x)}$, and its $C^1$ norm is  $\norm{f}_{C^1(\Omega)} = \norm{f}_{C^0(\Omega)} + \sum_{i=1} \norm{\partial_i f}_{C^0(\Omega)}$.


\subsection{Conic Feasbility and Alternatives}

Let $\xs, \ \ys$ be locally convex topological spaces, and $\ks$ be a closed convex cone in $\xs$.
The following result provides conditions for the existence of a feasible point in a conic program.

\begin{lem}[Farkas' Lemma \cite{CK77}]\label{lem:farkas}
Let $\A: \ys \rightarrow \xs$ be a continuous linear map with  adjoint $\A^\dagger: \xs^* \rightarrow \ys^*$. Let  $\bm{b} \in \ys^*$.
Assume that the set $\A^\dagger(\ks^*)$ is weak-star closed. The following two feasibility programs are strong alternatives, i.e.  exactly one of the two problems has a solution:

\begin{minipage}[t]{0.4\textwidth}
\begin{subequations}
\label{eq:farkas_dual}
\begin{align}
   \find_x \quad  &\A^\dagger(\bm{x}) = \bm{b} \\
    &\bm{x} \in \ks^*
\end{align}
\end{subequations}
\end{minipage}
\begin{minipage}[t]{0.4\textwidth}
\begin{subequations}
\label{eq:farkas_primal}
\begin{align}
   \find_{y} \quad  &\inp{\bm{y}}{\bm{b}} = -1 \\
    &\bm{y} \in \ys \\
    & \A(\bm{y}) \in \ks
\end{align}
\end{subequations}
\end{minipage}
\end{lem}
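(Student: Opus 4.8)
The plan is to verify the two halves of the strong-alternative claim: that \eqref{eq:farkas_dual} and \eqref{eq:farkas_primal} cannot both be feasible, and that infeasibility of \eqref{eq:farkas_dual} forces feasibility of \eqref{eq:farkas_primal}; together these give that exactly one of the two problems has a solution. The first half is immediate: if $\bm{x} \in \ks^*$ solves \eqref{eq:farkas_dual} and $\bm{y} \in \ys$ solves \eqref{eq:farkas_primal}, then the defining property of the adjoint gives $-1 = \inp{\bm{y}}{\bm{b}} = \inp{\bm{y}}{\A^\dagger \bm{x}} = \inp{\A \bm{y}}{\bm{x}}$, which is nonnegative because $\A\bm{y} \in \ks$ and $\bm{x} \in \ks^*$ --- a contradiction. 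So at most one program is feasible.

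For the second half, assume $\bm{b} \notin \A^\dagger(\ks^*)$. The set $\A^\dagger(\ks^*) \subseteq \ys^*$ is the linear image of the convex cone $\ks^*$, hence is itself a convex cone containing the origin, and it is weak-star closed by hypothesis. Regarding $\ys^*$ as a locally convex space under the weak-star topology $\sigma(\ys^*,\ys)$, whose topological dual is exactly $\ys$, I would apply the Hahn--Banach separation theorem to the disjoint pair formed by the compact convex singleton $\{\bm{b}\}$ and the closed convex set $\A^\dagger(\ks^*)$. This produces $\bm{y} \in \ys$ and $\alpha \in \R$ with $\inp{\bm{y}}{\bm{b}} < \alpha \le \inp{\bm{y}}{\bm{z}}$ for every $\bm{z} \in \A^\dagger(\ks^*)$. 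Since $\A^\dagger(\ks^*)$ is a cone through $0$, the values $\inp{\bm{y}}{\bm{z}}$ are bounded below by $\alpha$ yet closed under nonnegative scaling, so none can be strictly negative and the infimum is $0$; hence $\alpha \le 0$, $\inp{\bm{y}}{\bm{b}} < 0$, and $\inp{\A\bm{y}}{\bm{x}} = \inp{\bm{y}}{\A^\dagger\bm{x}} \ge 0$ for all $\bm{x} \in \ks^*$.

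The inequality $\inp{\A\bm{y}}{\bm{x}} \ge 0$ for all $\bm{x} \in \ks^*$ says precisely that $\A\bm{y}$ lies in the bidual cone $\ks^{**}$. By the bipolar theorem, $\ks^{**}$ equals the $\sigma(\xs,\xs^*)$-closed convex conic hull of $\ks$, which coincides with $\ks$ itself, since $\ks$ is already a convex cone and a convex set that is closed in the original locally convex topology is also weakly closed. Thus $\A\bm{y} \in \ks$. Finally, because $\inp{\bm{y}}{\bm{b}}$ is a strictly negative real number, replacing $\bm{y}$ by $\bm{y}/\abs{\inp{\bm{y}}{\bm{b}}}$ --- a positive rescaling that preserves $\A\bm{y} \in \ks$ because $\ks$ is a cone --- normalizes the pairing to $\inp{\bm{y}}{\bm{b}} = -1$, so \eqref{eq:farkas_primal} is feasible. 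Combined with the first half, exactly one of the two programs is feasible.

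I expect the main obstacle to be the topological bookkeeping around the separation step: one must fix the bilinear pairings so that $\ys$ is genuinely the dual of $(\ys^*,\sigma(\ys^*,\ys))$, guaranteeing that Hahn--Banach returns an \emph{element of $\ys$} rather than of a larger space, and must recognize that the weak-star closedness of $\A^\dagger(\ks^*)$ is exactly the hypothesis that licenses strict separation of the point $\bm{b}$ from that cone. Once the pairings are pinned down, the bipolar identification $\ks^{**}=\ks$ and the conic rescaling are routine, and the ``not both'' direction needs only the adjoint identity together with the definition of the dual cone.
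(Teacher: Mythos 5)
The paper itself does not prove Lemma~\ref{lem:farkas}; it states it as a cited result from \cite{CK77}, so there is no in-paper argument to compare against. Your proof supplies the standard argument and appears correct: the ``not both'' direction follows from the adjoint identity $\inp{\bm{y}}{\A^\dagger\bm{x}} = \inp{\A\bm{y}}{\bm{x}}$ and the definition of the dual cone; for the converse, you separate $\bm{b}$ from the weak-star closed convex cone $\A^\dagger(\ks^*)$ in $(\ys^*,\sigma(\ys^*,\ys))$, correctly using that the continuous dual of that space is $\ys$ so the separating functional lives in $\ys$, deduce $\alpha\le 0$ and $\inp{\bm{y}}{\bm{z}}\ge 0$ from the conic structure, identify $\A\bm{y}\in\ks^{**}=\ks$ via the bipolar theorem (valid because $\ks$ is convex and closed, hence weakly closed), and rescale. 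This is precisely where the weak-star closedness hypothesis is consumed, and the argument is sound and complete.
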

The existence of a vector $\bm{y}$ satisfying \eqref{eq:farkas_primal} certifies that \eqref{eq:farkas_dual} does not possess a solution (is infeasible). An $x$ satisfying \eqref{eq:farkas_dual} implies that the infimal value of $\inp{\bm{y}}{\bm{b}}$ is non-negative for all $\bm{y} \in \ys, \ \A(\bm{y}) \in \ks$.

\subsection{Barrier Functions}

Let $X \subset \R^n$ and $U \subset \R^m$ be compact sets, and let $f : \R^n\times \R^m \to \R^n$ be a $C^1$ function. The initial set $X_0 \subset X$ is {safe} with respect to the unsafe set $X_1 \subset X$ if for every initial state $x(0) \in X_0$ there is a control $u : [0,\infty) \to U$ such that the solution $x : [0,\infty) \to X$ of the differential equation $\dot{x}=f(x,u)$ satisfies $x(t) \notin X_1$ for all $t\geq 0$. Safety may be proven by means of barrier functions.

\begin{thm}[Theorem 1 of \cite{prajna2005necessity}]
    \label{thm:barrier}

A sufficient condition for the initial set $X_0$ to be {safe} with respect to the terminal set $X_1$ is that there exists a (barrier) function $v(x)$ satisfying
\cite{prajna2004safety, prajna2006barrier}:
\begin{subequations}
\label{eq:barrier}
   \begin{align}
   \find_v \quad     &v(x) \leq 0 & &  \forall x \in X_1 \label{eq:barrier_xu}\\
       &v(x) > 0 & & \forall x \in X_0 \label{eq:barrier_x0}\\
       &f(x, u) \cdot \nabla_x v(x) \geq 0 & & \forall x \in X, \ u \in U\label{eq:barrier_lie}
    \end{align}
\end{subequations}
where $\nabla_x$ denotes the gradient w.r.t. $x$.
This sufficient condition is necessary if a Slater (i.e. interior point) condition holds: there exists a $C^1$ function $\tilde{v}$ such that $\forall x \in X: f(x, u) \cdot \nabla_x \tilde{v}(x) > 0$.
\end{thm}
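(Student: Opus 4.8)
The plan is to prove the two implications by quite different means. The sufficiency direction is a short monotonicity (Lyapunov-type) argument: suppose $v\in C^1(X)$ satisfies \eqref{eq:barrier_xu}--\eqref{eq:barrier_lie}; along any admissible trajectory $x(\cdot)$ that remains in $X$, the chain rule and \eqref{eq:barrier_lie} give $\tfrac{d}{dt}v(x(t))=f(x(t),u(t))\cdot\nabla_x v(x(t))\ge 0$, so $t\mapsto v(x(t))$ is nondecreasing; if $x(0)\in X_0$ then $v(x(0))>0$ by \eqref{eq:barrier_x0}, whence $v(x(t))>0$ for all $t\ge 0$, whereas $v\le 0$ on $X_1$ by \eqref{eq:barrier_xu}, so $x(t)\notin X_1$. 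Hence no admissible trajectory inside $X$ steers $X_0$ into $X_1$, which is the operative form of safety. The necessity direction is obtained by casting the search for $v$ as an infinite-dimensional conic feasibility program and recognizing it as the Farkas alternative of Lemma~\ref{lem:farkas}.

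For necessity, assume $X_0$ is safe --- no admissible trajectory inside $X$ joins $X_0$ to $X_1$ --- and let $\tilde v$ be the Slater function. First I would encode controlled reachability from $X_0$ to $X_1$ as a program of the shape \eqref{eq:farkas_dual}: the decision variable is a triple of nonnegative measures $(\mu_0,\mu_1,\mu)\in\Mp{X_0}\times\Mp{X_1}\times\Mp{X\times U}$ (an initial measure, a terminal measure, and an occupation measure), the map $\A^\dagger$ bundles the Liouville transport identity $\inp{f\cdot\nabla_x w}{\mu}=\inp{w}{\mu_1}-\inp{w}{\mu_0}$ (for all $w\in C^1(X)$) with a mass normalization such as $\mu_0(X_0)=1$, and $\bm{b}$ collects the right-hand sides. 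A genuine trajectory induces such measures, and conversely feasibility of this program must force an actual joining trajectory: this is the tightness of the occupation-measure relaxation, valid here from the compactness of $X,U,X_0,X_1$ and the $C^1$-regularity of $f$, by a superposition argument in the spirit of \cite{lewis1980relaxation,lasserre2009moments}. Hence safety of $X_0$ is equivalent to infeasibility of \eqref{eq:farkas_dual}.

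I would then apply Lemma~\ref{lem:farkas}, whose only hypothesis is weak-$\star$ closedness of $\A^\dagger(\ks^*)$, and this is exactly where $\tilde v$ enters: it makes the Lie-derivative constraint hold strictly, so $\A$ sends it into the topological interior of the nonnegativity cone $C_+(X\times U)$ associated with constraint \eqref{eq:barrier_lie} --- on the compact set $X\times U$, strict positivity of $f\cdot\nabla_x\tilde v$ is the same as being bounded below by a positive constant, i.e.\ as interior membership --- and the classical interior-point (Slater) sufficient condition then supplies the required weak-$\star$ closedness. Lemma~\ref{lem:farkas} now equates infeasibility of \eqref{eq:farkas_dual} with feasibility of \eqref{eq:farkas_primal}, and unwinding the certificate $\bm{y}$ --- after a sign change --- produces a $v\in C^1(X)$ with $f\cdot\nabla_x v\ge 0$ on $X\times U$ and, from the normalization $\inp{\bm{y}}{\bm{b}}=-1$ together with a rescaling, the separation $v>0$ on $X_0$ and $v\le 0$ on $X_1$: precisely a barrier function in the sense of \eqref{eq:barrier}. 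Reconciling these strict inequalities, and those implicit in the Slater hypothesis, with the closed-cone formulation that Lemma~\ref{lem:farkas} requires is the content of Appendix~\ref{app:strict}.

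The monotonicity half is routine; the weight of the argument is in necessity, where the two delicate points are (i) the equivalence of safety with infeasibility of the measure program, whose nontrivial implication is the relaxation-tightness/superposition step, and (ii) the verification of the weak-$\star$ closedness hypothesis of Lemma~\ref{lem:farkas}, for which the Slater function $\tilde v$ is precisely tailored --- its frequent nonexistence, as the introduction stresses, being exactly what forces the time-dependent refinement of Sections~\ref{sec:path_disconnected}--\ref{sec:sos}.
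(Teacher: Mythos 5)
The paper does not prove this statement: it is Theorem~1 of \cite{prajna2005necessity}, imported with attribution and without a reproduced argument, so there is no in-paper proof against which to check your attempt. What you have written is a reconstruction of a cited result, and I will assess it on those terms.

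Your sufficiency direction is the standard monotonicity argument and is correct. It in fact establishes the \emph{for all controls} conclusion (no admissible trajectory inside $X$ joins $X_0$ to $X_1$), and you correctly read the paper's existentially-worded prose definition of ``safe'' as an imprecision: the operative definition, both in Prajna--Rantzer and in the paper's subsequent use of this result as a disconnectedness certificate, is the universal one.

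Your necessity strategy (occupation measures plus Lemma~\ref{lem:farkas}) is in the right spirit and matches both Prajna--Rantzer's duality argument and the paper's time-dependent analogue in Section~\ref{sec:path_disconnected}, but the weak-$\star$ closedness step is waved through, and the appeal to a ``classical interior-point sufficient condition'' does not do the job as stated. Having $f\cdot\nabla_x\tilde v>0$ on $X\times U$ places only \emph{one} component of $\A(\tilde v,\gamma)$ in the interior of its factor cone $C_+(X\times U)$; there is no generic principle converting an interior point in one factor into weak-$\star$ closedness of $\A^\dagger(\ks^*)$. The actual mechanism by which $\tilde v$ enters is quantitative: plugging $\tilde v$ as a test function into the Liouville identity shows $\inp{f\cdot\nabla_x\tilde v}{\mu}=\inp{\tilde v}{\mu_1}-\inp{\tilde v}{\mu_0}$ is bounded by a constant (since $\tilde v$ is continuous on compact $X$ and $\mu_0,\mu_1$ are probability measures after the normalization), and since $f\cdot\nabla_x\tilde v\geq\varepsilon>0$ on the compact $X\times U$, this bounds the total mass $\mu(X\times U)\leq C/\varepsilon$. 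Together with $\mu_0(X_0)=\mu_1(X_1)=1$ this yields weak-$\star$ compactness of the feasible measure set and hence closedness of the relevant affine image; this is what you must make explicit for the Farkas alternative to apply. The other step you compress (``this is the tightness of the occupation-measure relaxation'') also carries genuine weight: it is a superposition-principle argument, and the paper itself flags this by invoking Ambrosio's theorem when proving the analogous equivalence for \eqref{eq:connect_meas}.
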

The barrier function begins positive on $X_0$ \eqref{eq:barrier_x0} and increases along all trajectories \eqref{eq:barrier_lie}. It is therefore not possible for trajectories to visit $X_1$ where the barrier function is non-positive \eqref{eq:barrier_xu}. The existence of a $v$ that solves \eqref{eq:barrier} is sufficient to certify safety of trajectories with respect to $X_1$. Barrier functions are in general non-unique.

The conditions in \eqref{eq:barrier} may be relaxed while still returning a barrier certificate for safety. The constraint in \eqref{eq:barrier_lie} ensures that the level sets of $v$ are invariant. Condition \eqref{eq:barrier_lie} may be modified to find the existence of a class-$\mathcal{K}$ function $\kappa$ (i.e. $\kappa(0) = 0$, $\kappa$ is nondecreasing) such that \cite{ames2019control}
\begin{align}
    &f(x, u) \cdot \nabla_x v(x) + \kappa(v(x)) \geq 0 & & \forall x \in X, \ u \in U.\label{eq:barrier_lie_kappa}
\end{align}
Constraint \eqref{eq:barrier_lie_kappa} ensures that the zero-level set of $v$ is invariant, while allowing $v$ to fall within the safe set where it is strictly positive.
Constraint \eqref{eq:barrier_xu} may also be slackened to $v(x) \leq 0 \ \forall x \in \partial X_1$ with no loss of generality if $X_0 \cap X_1 = \varnothing$. 


\subsection{Occupation Measures}
\label{sec:occ_measures}
Let $x_0 \in X_0$ be an initial condition of the dynamical system $\dot{x} = f(t, x)$ for a time range $t \in [0, T]$. The trajectory starting at $x_0$ is denoted by $x(t \mid x_0)$. Given subsets $A \subset [0, T], \ B \subset X$, the occupation measure returns the total amount of time the trajectory $x(t \mid x_0)$ spends in the region $A \times B$:

\begin{equation}
    \label{eq:occ_measure_single}
\mu(A \times B \mid x_0) = \int_0^T I_{A \times B}(t, x(t \mid x_0)) dt
\end{equation}
where $I_S$ denotes the function equal to one on $S$ and zero outside.
If $\mu_0 \in \Mp{X_0}$ is a measure over initial conditions, the average occupation measure w.r.t. $\mu_0$ is
\begin{equation}
\label{eq:occ_measure}
    \mu(A \times B) = \int_X \mu(A \times B \mid x_0) d \mu_0(x_0).
\end{equation}
The distribution of states at time $t=T$ found by tracking trajectories starting from $\mu_0$ is
\begin{equation}
\label{eq:final_measure}
    \mu_T(B) = \int_X I_B(x (T \mid x_0)) d \mu_0(x_0).
\end{equation}
The initial measure $\mu_0$, average occupation measure $\mu$, and final measure $\mu_T$ are linked by the continuity equation, also called the Liouville equation.
Let us use the symbol $\Lie_f: C^1([0, T] \times X) \rightarrow C([0, T] \times X)$ to refer to the Lie derivative operator along the vector field $f(x,u)$:
\begin{align}
    v \mapsto \Lie_f v(t, x) = \partial_t v(t, x) + f(x,u) \cdot \nabla_x v(t, x). \label{eq:lie_u}
\end{align}
The Liouville equation takes the following forms:
\begin{subequations}
\label{eq:liou}
\begin{align}
        \inp{v(T, x)}{\mu_T} &= \inp{v(0, x)}{\mu_0} + \inp{\partial_t v(t,x) + f(t,  x) \cdot \nabla_x v(t,x)}{\mu} \label{eq:liou_int} \\ 
        \delta_T \otimes\mu_T  &=  \delta_0 \otimes\mu_0 + (\partial_t + f \cdot \nabla_x)^\dagger \mu. \label{eq:liou_final}
\end{align}
\end{subequations}
The linear equation on measures \eqref{eq:liou_final} is equivalent (in a distributional sense) to the weak integral form \eqref{eq:liou_int} which holds for all test functions $v(t,x) \in C^1([0, T] \times X)$. Two consequences of Liouville's theorem is that $\mu_0(X_0) = \mu_T(X)$ (with $v = 1$) and $\mu([0, T] \times X) = T$ (with $v = t$). 

Control action can be incorporated into the occupation measure formulation. Let $U$ be a compact set of plausible controls at each moment in time, where the null control $u=0$ is an interior point of $U$. Two such examples are the unit ball $U = \{u \in \R^n \mid \norm{u}_2^2  \leq 1\}$ and the unit box $U =  [-1, 1]^n$.
A control occupation measure can be defined for any subset $C \subset U$:
\begin{equation}
    \label{eq:occ_measure_w}
    \mu(A \times B \times C) = \quad \int_{[0, T] \times X \times U} I_{A \times B \times C}((t, x(t), u(t)) \mid x_0) d \mu_0(x_0). 
\end{equation}
The Liouville equation in \eqref{eq:liou} can be extended to control-occupation measures as
\begin{subequations}
\label{eq:liou_u}
\begin{align}
        \inp{v(T, x)}{\mu_T} &= \inp{v(0, x)}{\mu_0} + \inp{\partial_t v(t,x) + f(t, x, u) \cdot \nabla_x v(t,x)}{\mu} \label{eq:liou_u_int} \\ 
        \delta_T \otimes\mu_T  &=  \delta_0 \otimes\mu_0 + \pi^{tx}_\# (\partial_t + f \cdot \nabla_x)^\dagger \mu \label{eq:liou_u_final}
\end{align}
\end{subequations}
where the pushforward of the projection measure $\pi^{tx}_\# \mu$ marginalizes out the control $u$, and yields an occupation measure $\nu$ in $(t, x)$. More explicitly, $\pi^{tx} : (t,x,u) \mapsto (t,x)$ is the projection map on the $(t,x)$ coordinates, and the push-forward measure $d\nu(t,x):=\pi^{tx}_\# d\mu(t,x,u)$ is such that
$\int_{A\times B\times C} v(t,x)d\mu(t,x,u) = \int_{A\times B\times C} v(t,x)\pi^{tx}_\# d\nu(t,x)$ for all test functions $v(t,x) \in C^1([0,T]\times X)$ and all subsets $A \subset [0,T]$, $B \subset X$, $C \subset U$.




\section{Path Connectedness}
\label{sec:path_connected}
This paper poses path-disconnectedness as the infeasibility of a measure program derived from the framework of \acp{OCP}. Such infeasibility can be proven by the necessary and sufficient existence of time-dependent barrier functions through the Farkas lemma.

This section begins by formulating \acp{LP} that certify path-connectedness. The subsequent section poses alternative \acp{LP} through the Farkas lemma to certify path-disconnectedness.

As in calculus of variations, we treat the coordinates $x \in X \subset \R^n$ as states of the single-integrator dynamical system $\dot{x} = u$ for a control input $u(\cdot): [0, \infty) \rightarrow U$.

\subsection{Assumptions}
The following assumptions hold throughout this paper.
\begin{itemize}
    \item[A1] The sets $X_0, X_1, X$ are compact.
    \item[A2] The input set $U$ is a convex, compact, full-dimensional set containing the origin.
\end{itemize}

\subsection{Time Horizons for Path-Connectedness}

The path-connectedness problem may be formulated as \iac{OCP}.
Assuming that two given points $x_0, x_1$ lie within the same connected component of $X$, the following \ac{OCP} returns the Euclidean geodesic distance, or path length, between $x_0$ and $x_1$ in $X$:
$$\begin{subequations}
\label{eq:geodesic_l2}
\begin{array}{rcclll}
\tau_X(x_0,x_1)& := &\inf_{x(\cdot), \tau} & \tau \\
&&\mathrm{s.t.}\: & x(0) = x_0, & x(\tau) \in x_1 \\
&&&x(t) \in X, & \dot{x}(t) \in U := \{u \in \R^n \mid \norm{u}_2 \leq 1\},  & \forall t \in [0, \tau]. \label{eq:geodesic_l2_input}
\end{array}
\end{subequations}$$
Note that this OCP can also be equivalently formulated as follows:
$$\begin{array}{rcclll}
\tau_X(x_0,x_1)& = &\inf_{x(\cdot), u(\cdot)}& \int_0^1 \|u(t)\|_2 dt \\
&&\mathrm{s.t.}\: & x(0) = x_0, & x(1) \in x_1 \\
&&&x(t) \in X, & \dot{x}(t) = u(t),  & \forall t \in [0, \tau]. \label{eq:geodesic_l2_input}
\end{array}$$
Choosing other control sets $U$ subject to assumption A2  results in other finite values of the geodesic distance for other metrics.

Assume that the set $X$ may be decomposed into a union of connected components $X = \cup_{i=1}^{N_c}X^{i}$.
For a fixed input set $U$ under A2, a horizon $T^i$ may be generated as the maximal time to connect any pair of points in $X^i$:
$$\begin{subequations}
\label{eq:geodesic_connected}
\begin{array}{rcllll}
    T^i & = &\sup_{x_0, x_1 \in X^i} & \inf_{x(\cdot), \tau} \tau \\
&&&\mathrm{s.t.}    &x(0) = x_0, \ x(\tau) \in x_1 \\
&&&&x(t) \in X & \forall t \in [0, \tau] \\
 & & & &\dot{x}(t) \in U & \forall t \in [0, \tau]. \\
\label{eq:geodesic_connected_input}
\end{array}
\end{subequations}$$
The maximal time required to connect any pair of points in the same connected component $X$ is
\begin{align}
    T_{X} = \max_{i=1..N_c} T^i. \label{eq:time_connected}
\end{align} 

\subsection{Upper-Bounds on Time Horizons}

The maximal time \eqref{eq:time_connected} may be computationally difficult to find. In certain cases, upper-bounds $T \geq T_X$ may be computed. As an example, consider the case where $X$ may be expressed as a finite union of $N_b$ 2-dimensional boxes $X = \cup_{j=1}^{N_b} [a^j_1, a^j_2] \times [b^j_1, b^j_2]$. Under the Euclidean scenario where $U = \{u \in \R^2 \mid \norm{u}_2 \leq 1\}$, the true connectivity time horizon $T_X$ is upper-bounded by the finite quantity 
\begin{equation*}
    T_X \leq \sum_{j=1}^{N_b} \sqrt{(a_2^j - a_1^j)^2 + (b_2^j-b_1^j)^2} = T.
\end{equation*}
Another upper-bound $T \geq T_X$ may be insantiated if $X$ is a connected set generated by a single polynomial inequality constraint:
\begin{thm}[Theorem 2.1 of \cite{d2003bounds}]
Let $X := \{x \in \R^n \mid g(x) \geq 0\} \subseteq B^n$ (unit Euclidean ball), where $g$ is a given polynomial of degree $d$ with $n, d \geq 2$. Then the maximum Euclidean geodesic distance between any two points of $X$ satisfies
\begin{subequations}
\label{eq:kurdyka_bound_all}
\begin{align}
     \sup_{x_0, x_1 \in X} \tau_X(x_0, x_1) \leq 4 \Gamma\left(\frac{1}{2}\right) \Gamma\left(\frac{n+2}{2}\right) \Gamma\left(\frac{n+1}{2}\right)^{-1} d(4d-5)^{n-1} \label{eq:kurdyka_bound}\\ \intertext{where $\Gamma$ is the Euler Gamma function.}
\end{align}
    \end{subequations}
\end{thm}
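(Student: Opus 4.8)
The plan is to follow the approach of D'Acunto and Kurdyka: bound the geodesic diameter of $X$ by controlling the arclength of gradient trajectories inside $X$, and obtain the explicit constant through an induction on the ambient dimension $n$ based on slicing $B^n$ by hyperplanes. Throughout we use the connectedness of $X$ that is implicit in the surrounding discussion; if $X$ is disconnected, the statement is read on each connected component.

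The first step reduces $\tau_X(x_0,x_1)$ to a gradient-trajectory problem. Fix a linear form $\ell$ in general position with respect to $g$ and the sphere, and run the stratified gradient flow of $\ell|_X$: the flow of the gradient of $\ell$ projected onto the stratum of $X$ containing the current point, sliding along $\{g=0\}$ and along $\partial B^n$ whenever these are met. By stratified Morse theory every point of $X$ reaches a critical point of $\ell|_X$ along such a trajectory, and since $X$ is connected its finitely many critical strata are linked through lower-dimensional strata, so $x_0$ and $x_1$ can be joined inside $X$ by concatenating a bounded number of such trajectories. A Sard-type genericity argument on $\ell$ keeps the strata and critical points finite, their number controlled by a Milnor--Thom-type estimate in $d$ and $n$. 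On the open stratum $\{g>0,\ \norm{x}<1\}$ a trajectory is a straight segment of length at most $2$; all the work is in bounding the length accrued while sliding on $\{g=0\}$ and $\partial B^n$.

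The second step bounds that sliding length. Reparametrizing a trajectory $\gamma$ by the value $s=\ell(\gamma(s))$ gives $\operatorname{length}(\gamma)=\int \frac{ds}{\norm{\nabla_X \ell(\gamma(s))}}$, so it suffices to bound the projected gradient $\norm{\nabla_X\ell}$ from below along $\gamma$ by a {\L}ojasiewicz-type inequality $\norm{\nabla_X\ell(x)}\ge c\,\abs{s^\ast-\ell(x)}^{\rho}$ with explicit exponent $\rho<1$ and constant $c>0$ ($s^\ast$ a critical value); since $\ell$ ranges over an interval of length at most $2$ on $B^n$, integration yields a bound of order $\tfrac{c^{-1}}{1-\rho}$. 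The explicit values of $\rho$ and $c$ come from the induction on $n$: restricting the data to a generic affine hyperplane $\{x_n=t\}$, which cuts $B^n$ in a ball of radius $\sqrt{1-t^2}$ and leaves $g$ of degree at most $d$, one invokes the $(n-1)$-dimensional estimate on the slice and integrates over $t\in[-1,1]$. Two contributions emerge. The slice weight $(1-t^2)^{(n-1)/2}$ integrates to $B(\tfrac12,\tfrac{n+1}{2})=\mathrm{vol}(B^n)/\mathrm{vol}(B^{n-1})$, whose reciprocal — times the leading factor $4$ coming from the bookkeeping of the reduction step — is exactly the prefactor $4\Gamma(\tfrac12)\Gamma(\tfrac{n+2}{2})\Gamma(\tfrac{n+1}{2})^{-1}=4\pi\,\mathrm{vol}(B^{n-1})/\mathrm{vol}(B^n)$ of \eqref{eq:kurdyka_bound}. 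Meanwhile the degree accounting through the $n-1$ slicing steps — each gradient drops degree by one, and expressing the sliding constraints and the associated polar/discriminant varieties compounds degrees linearly — yields the factor $d(4d-5)^{n-1}$, the base case $n=2$ being checked directly using that a degree-$d$ polynomial restricted to a line has at most $\lceil d/2\rceil$ sign-nonnegative intervals.

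The main obstacle is making the induction close with exactly this constant: holding the per-dimension degree growth linear in $d$ (so as to reach $4d-5$ rather than something quadratic in $d$) requires a careful choice of the auxiliary varieties along which slicing and the gradient inequality are tracked, together with a careful reparametrization of the constrained trajectory — this is the technical heart of the argument. Secondary difficulties are controlling the flow where $\{g=0\}$ and $\partial B^n$ meet, where the trajectory must change stratum, and bounding the number of such changes and of critical strata; both are handled by a semialgebraic stratification of $X$ together with Milnor--Thom-type finiteness bounds. Assembling the single-trajectory bound, the number of concatenations, and the two halves of the reduction step then gives the stated inequality.
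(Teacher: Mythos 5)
The paper's proof is a one-line reduction, not a derivation from first principles: D'Acunto--Kurdyka's Theorem~2.1 concerns compact connected real algebraic \emph{varieties}, and the paper applies it not to $X$ but to the lift $X_g=\{(x,y)\in\R^{n+1}: g(x)=y^2\}$, an algebraic variety in $\R^{n+1}$ of degree $\max(d,2)=d$. The coordinate projection $(x,y)\mapsto x$ maps $X_g$ onto $X$ and contracts arclength, so the geodesic diameter of $X$ is bounded by that of $X_g$, and the bound follows. Your proposal misses this lifting step entirely and instead attempts to reprove the cited theorem from scratch directly on the inequality-defined set $X\subset\R^n$.

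Beyond being a sketch whose hard steps are asserted rather than derived (the degree growth through the slicing induction, the explicit {\L}ojasiewicz constant and exponent, the number of concatenated trajectories, and the extra bookkeeping that the second active constraint $\norm{x}\le 1$ would force), your route cannot reproduce the stated constants. Whatever the exact shape of the D'Acunto--Kurdyka bound at ambient dimension $m$, the gamma arguments $\tfrac{n+2}{2}$ and $\tfrac{n+1}{2}$ and the exponent $n-1$ in \eqref{eq:kurdyka_bound} are what that bound produces at $m=n+1$, i.e.\ \emph{after} lifting; a direct derivation carried out at dimension $n$ as you set it up would give gamma arguments and an exponent each shifted down by one --- a genuinely different inequality, not the one being proved. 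Your own prefactor bookkeeping already hints at the mismatch: you compute the slice integral $B(\tfrac12,\tfrac{n+1}{2})=\Gamma(\tfrac12)\Gamma(\tfrac{n+1}{2})/\Gamma(\tfrac{n+2}{2})$ and then identify four times its reciprocal with $4\Gamma(\tfrac12)\Gamma(\tfrac{n+2}{2})/\Gamma(\tfrac{n+1}{2})$, which is larger by a factor $\Gamma(\tfrac12)^2=\pi$.
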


\begin{proof}
Applying Theorem 2.1 of \cite{d2003bounds} gives an upper bound on the maximum Euclidean geodesic distance on the real algebraic variety $X_g=\{(x,y)\in \R^{n+1} : g(x)=y^2\}$. The set $X_g$ is compact because $X$ is compact.
\end{proof}

\subsection{Path Feasibility}

This subsection poses \iac{LP} in measures that can provide a proof of path-connectedness.


Define $T_s$ as the minimal amount of time required to connect any pair of points in $X_0 \times X_1$ via the single-integrator dynamics:
\begin{subequations}
    \label{eq:min_time_orig}
\begin{align}
    T_s = & \inf_{x_0, x_1, u}  \tau &\\
     & x(0) = x_0 \in X_0 \\
     & x(\tau) = x_1 \in X_1 &  \\
     & x(t) \in X & t \in [0, \tau] \\
     & \dot{x}(t) = u(t), \ u(t) \in U & t \in [0, \tau]
\end{align}
\end{subequations}

\begin{prop}
    The points $(x_0, x_1)$ are path-connected if $T_s \leq T_X < \infty$. The points $(x_0, x_1)$ are path-disconnected if $T_s=\infty$, which implies that $T_s > T \geq T_X$.
\end{prop}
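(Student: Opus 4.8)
The statement has two independent implications, and the plan is to prove each by unwinding the definitions of $T_s$ in \eqref{eq:min_time_orig}, of $T^i$ in \eqref{eq:geodesic_connected}, and of $T_X$ in \eqref{eq:time_connected}. Throughout I read ``the points $(x_0,x_1)$ are path-(dis)connected'' as the property from Section~\ref{sec:introduction} that the sets $X_0$ and $X_1$ are path-(dis)connected in $X$, and I use freely the standing fact secured in the ``Upper-Bounds on Time Horizons'' subsection: $T_X$ is finite and is dominated by the computable horizon $T$, hence every component time $T^i$ is finite as well (this is where the exhibited bounds, e.g. the Kurdyka-type bound of Theorem~2.1 of \cite{d2003bounds}, enter).

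\textbf{First implication ($T_s \le T_X < \infty \Rightarrow$ path-connected).}
I would assume $T_s \le T_X < \infty$, so in particular $T_s < \infty$. By the convention that an infimum over the empty set is $+\infty$, finiteness of $T_s$ forces the feasible set of \eqref{eq:min_time_orig} to be nonempty; I then pick \emph{any} feasible tuple (no minimiser is needed): there exist $x_0 \in X_0$, $x_1 \in X_1$, a measurable control $u:[0,\tau]\to U$ with $\tau < \infty$, and the associated trajectory $x(t) = x_0 + \int_0^t u(s)\,ds$ satisfying $x(\tau)=x_1$ and $x(t)\in X$ for all $t\in[0,\tau]$. Since $U$ is compact, $u$ is essentially bounded, so $x$ is Lipschitz on $[0,\tau]$, hence continuous; the reparametrisation $s\mapsto x(s\tau)$ is then a continuous map $[0,1]\to X$ sending $0$ to $x_0\in X_0$ and $1$ to $x_1\in X_1$, which is exactly the definition of path-connectedness of $X_0$ and $X_1$ in $X$. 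Note this uses only $T_s<\infty$, not the sharper bound $T_s\le T_X$.

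\textbf{Second implication ($T_s=\infty \Rightarrow$ path-disconnected).}
I would prove the contrapositive: assuming $X_0$ and $X_1$ are path-connected in $X$, derive $T_s<\infty$. Path-connectedness gives a continuous $\gamma:[0,1]\to X$ with $\gamma(0)=x_0\in X_0$ and $\gamma(1)=x_1\in X_1$. Since $[0,1]$ is connected, $\gamma([0,1])$ is a connected subset of $X$ and therefore lies inside a single component $X^i$ of the decomposition $X=\bigcup_{i=1}^{N_c}X^i$; in particular $x_0,x_1\in X^i$. Now, $T^i$ is by definition the supremum over \emph{all} pairs in $X^i$ of the single-integrator connection time in $X$, and $T^i\le T_X<\infty$; hence the connection time of the specific pair $(x_0,x_1)$ is finite (at most $T^i$), so the corresponding inner infimum in \eqref{eq:geodesic_connected} has a nonempty feasible set, yielding an admissible single-integrator trajectory from $x_0$ to $x_1$ inside $X$ in some time $\tau\le T^i$. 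That trajectory is feasible for \eqref{eq:min_time_orig}, so $T_s\le\tau\le T^i\le T_X<\infty$, establishing the contrapositive. The trailing assertion $T_s>T\ge T_X$ is then immediate: $T$ was built as a finite upper bound for $T_X$, so when $T_s=\infty$ one trivially has $T_s>T\ge T_X$.

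\textbf{Main obstacle.}
The analytic ingredients (essential boundedness $\Rightarrow$ Lipschitz continuity, continuity of $\gamma$ transferring to connectedness of its image, the empty-infimum convention) are all routine. The one genuine subtlety I would flag explicitly rather than bury is that the argument leans entirely on $T_X<\infty$, i.e.\ on every connected component of $X$ being traversable by a single-integrator trajectory of uniformly bounded duration. This does \emph{not} follow from compactness (A1) alone---comb-like or $\sin(1/x)$-type compact connected sets admit no finite $T^i$---but it holds for the semialgebraic sets of interest, and it is precisely what the preceding subsection establishes by producing an explicit finite $T\ge T_X$. (A secondary point worth a sentence: a trajectory feasible for \eqref{eq:min_time_orig} need not stay in $X^i$, but this causes no problem, since the definitions of both $T_s$ and $T^i$ only require the trajectory to remain in $X$.)
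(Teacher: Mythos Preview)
The paper states this proposition without proof; it is treated as immediate from the definitions of $T_s$, $T^i$, and $T_X$. Your argument correctly supplies those details and is essentially what the authors leave implicit.

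One small imprecision worth tightening: in the second implication you write that the inner infimum being at most $T^i$ ``yield[s] an admissible single-integrator trajectory \dots in some time $\tau\le T^i$.'' Strictly, the infimum need not be attained, so you only get feasible $\tau\le T^i+\varepsilon$ for arbitrary $\varepsilon>0$. This does not damage the conclusion: since $T_s$ is itself an infimum over a feasible set containing this particular pair, you obtain $T_s\le T^i\le T_X<\infty$ directly without ever extracting a specific trajectory. Your flagging of the hypothesis $T_X<\infty$ as the genuine content (not a consequence of compactness alone, as comb-type examples show) is apt and aligns with the paper's reliance on the preceding subsection for the finite upper bound $T\ge T_X$.
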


Occupation measures may be used to determine if there exists a path between $X_0$ and $X_1$. 




\begin{prop}
There exists a path between $X_0$ and $X_1$ if and only if the following program has a solution:
\begin{subequations}
\label{eq:connect_meas}
\begin{align}
     \find_{\mu_0, \mu_p, \mu} \quad & \pi^{tx}_\#\Lie_u^\dagger \mu + \delta_0 \otimes \mu_0 = \delta_T \otimes \mu_T \label{eq:connect_liou}\\
      &\mu_0(X_0) = 1 \label{eq:connect_prob}\\
      &\mu_0 \in  \Mp{X_0}, \quad \mu_T \in \Mp{X_1}\\
      &\mu \in \Mp{[0, T] \times X \times U}.\label{eq:connect_occ}
\end{align}
\end{subequations}
\end{prop}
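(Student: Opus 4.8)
The plan is to prove the two directions of the equivalence separately, treating the "only if" (a path exists $\Rightarrow$ \eqref{eq:connect_meas} is feasible) as the construction of an explicit measure from a trajectory, and the "if" (feasibility $\Rightarrow$ a path exists) as the harder direction requiring a superposition/disintegration argument.

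\medskip

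\textbf{($\Rightarrow$) Constructing measures from a path.} Suppose $x_0\in X_0$, $x_1\in X_1$ are connected by a continuous curve inside $X$. First I would reparametrize: a continuous path can be reparametrized to a Lipschitz (indeed $C^1$ after smoothing, or at least absolutely continuous) curve $x:[0,\tau]\to X$ with $\dot x(t)\in U$ for a.e.\ $t$, where $\tau\le T_X\le T$ by the definition of $T_X$ in \eqref{eq:time_connected} and the upper bounds of the previous subsection; here I use assumption A2 (the origin is in the convex compact $U$) so that the curve can be extended by standing still on $[\tau,T]$ with control $u\equiv 0\in U$. Then set $\mu_0:=\delta_{x_0}$, $\mu_T:=\delta_{x_1}$, and define the occupation measure $\mu$ on $[0,T]\times X\times U$ by $\inp{w}{\mu}=\int_0^T w(t,x(t),\dot x(t))\,dt$ for $w\in C([0,T]\times X\times U)$. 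Conditions \eqref{eq:connect_prob} and \eqref{eq:connect_occ} are immediate. The Liouville identity \eqref{eq:connect_liou} is verified in weak form: for any $v\in C^1([0,T]\times X)$, the fundamental theorem of calculus gives $v(T,x(T))-v(0,x(0))=\int_0^T \big(\partial_t v(t,x(t))+\dot x(t)\cdot\nabla_x v(t,x(t))\big)dt$, which is exactly $\inp{v(T,\cdot)}{\mu_T}-\inp{v(0,\cdot)}{\mu_0}=\inp{\partial_t v+f\cdot\nabla_x v}{\mu}$ with $f=u$, i.e.\ \eqref{eq:liou_u_int}; the push-forward form \eqref{eq:liou_u_final} follows since $v$ does not depend on $u$. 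This direction is routine.

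\medskip

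\textbf{($\Leftarrow$) Extracting a path from measures.} This is the main obstacle. Given $(\mu_0,\mu_T,\mu)$ solving \eqref{eq:connect_meas}, I want to produce a single connecting curve. The idea is: (i) disintegrate $\mu$ in time, writing $d\mu(t,x,u)=d\mu_t(x,u)\,dt$ for a.e.\ $t\in[0,T]$ with $\mu_t$ probability-like (the normalization comes from $v=1$ and $v=t$ test functions, giving total mass $T$ on $[0,T]\times X\times U$ and mass $1$ per unit time); (ii) apply a superposition principle (Ambrosio's theorem / the theory in \cite{lewis1980relaxation} and the occupation-measure literature cited, e.g.\ the measurable selection arguments underlying \cite{henrion2008nonlinear}) to represent the solution of the continuity equation as an average of admissible curves: there exists a probability measure $\rho$ on the space of absolutely continuous curves $t\mapsto x(t)$ with $\dot x(t)\in U$, $x(t)\in X$, such that $\mu_0$ is the law of $x(0)$, $\mu_T$ is the law of $x(T)$, and $\mu$ is the time-averaged occupation of these curves; (iii) since $\mu_0$ is supported on $X_0$ and $\mu_T$ on $X_1$ and both are probability measures with $\mu_0(X_0)=1=\mu_T(X)$, $\rho$-almost every curve starts in $X_0$ and ends in $X_1$ while staying in $X$ — pick any one such curve, and reparametrize to $[0,1]$ to get the desired continuous $x:[0,1]\to X$ connecting a point of $X_0$ to a point of $X_1$. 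The delicate points are: justifying the superposition representation in the controlled (differential-inclusion $\dot x\in U$) setting rather than for a fixed velocity field — this is where convexity and compactness of $U$ (A2) and compactness of $X$ (A1) are essential, via Filippov-type selection to guarantee the representing curves are genuine admissible trajectories; and handling the support constraints so that "$\rho$-a.e.\ curve connects $X_0$ to $X_1$" is legitimate (one uses that $\{x(0)\in X_0\}$ and $\{x(T)\in X_1\}$ each have $\rho$-probability $1$, hence so does their intersection).

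\medskip

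\textbf{Summary of structure.} I would state and prove ($\Rightarrow$) in a short paragraph, then devote the bulk to ($\Leftarrow$), isolating the superposition principle as a cited lemma (attributing it to the occupation-measure/optimal-control literature already referenced) and then checking that the measure-support constraints force almost every representing trajectory to be a valid connecting path. The single genuinely new verification is the bookkeeping that $T\ge T_X$ makes the time horizon large enough in the forward direction and that the disintegration plus superposition is applicable in the backward direction; everything else is an assembly of standard facts.
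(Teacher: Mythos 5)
Your proof takes essentially the same route as the paper: the hard ($\Leftarrow$) direction rests on Ambrosio's superposition theorem applied to the controlled continuity equation, which is exactly what the paper invokes (via the cited occupation-measure lemma of Henrion--Korda that bundles the disintegration, superposition, and measurable-selection steps you outline). You also spell out the easy direction, which the paper leaves implicit; the only slight imprecision there is the phrase ``a continuous path can be reparametrized to a Lipschitz curve'' --- not every continuous curve is rectifiable, so rather than reparametrizing the given curve you should invoke the finiteness of $T_X$ to conclude that \emph{some} Lipschitz curve with $\dot x(t)\in U$ joins the endpoints within time $T$.
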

\begin{proof}
See \cite[Lemma 3]{HK14} where it is proven with the help of Ambrosio's superposition theorem \cite{ambrosio2008transport} that to any triplet of measures $(\mu_0,\mu_T,\mu)$ solving LP \eqref{eq:connect_meas}, there exists a family of
absolutely continuous admissible trajectories
for OCP \eqref{eq:min_time_orig} starting from the support of $\mu_0$ such that the occupation measure and the terminal measure generated by this family
of trajectories are equal to $\mu$ and $\mu_T$ respectively.

The measures $\mu_0$  and $\mu_1$ are probability distributions over $X_0$ and $X_1$ by constraint \eqref{eq:connect_prob}. If \eqref{eq:connect_meas} has a solution and $\mu_0$ and $\mu_1$ are Dirac measures, then a path exists between $x_0 = \supp{\mu_0}$ and $x_1 = \supp{\mu_1}$. Feasibility of constraint \eqref{eq:connect_liou} implies that $\mu$ is supported on the graph $(t, x(t), u(t))$ such that $x(t)$ is the path $x_0 \rightarrow x_1$ and $u(t)$ is the control action (velocities) achieving this path \cite{lasserre2008nonlinear}. By condition \eqref{eq:connect_occ}, the trajectory $x(t)$ connecting $x_0 \rightarrow x_1$ spends all of its time in $X$. 
For general measures $\mu_0, \ \mu_1$, feasibility  of \eqref{eq:connect_meas}  implies that there is a way to connect some set of points in $X_0$ ($\supp{\mu_0}$) to another set of points in $X_1$ ($\supp{\mu_1}$) where the path stays within $X$. The sets $X_0$  and $X_1$ are therefore path-connected within $X$. 

\end{proof}

Note that terminal time $T$ is such that the path between $X_0$ and $X_1$ remains in $X$ within a time horizon of $T$. It can be used as a decision variable in LP \eqref{eq:connect_meas}, since it appears linearly as the mass of the occupation measure $\mu$.

\section{Path Disconnectedness}
\label{sec:path_disconnected}
This section formulates time-dependent barrier functions to certify infeasibility of \eqref{eq:connect_meas} within a time horizon $T \geq T_X$.

\subsection{Path Infeasibility}

If there does not exist a solution to \eqref{eq:connect_meas}, then $X_0$ and $X_1$ are path-disconnected (given that $T \geq T_X$). Farkas' Lemma \ref{lem:farkas} may be used to certify nonexistence of  measures $\mu_0, \ \mu_1, \ \mu$ that satisfy \eqref{eq:connect_meas}. 

\begin{thm}
    LP \eqref{eq:connect_meas} is infeasible if and only if the following LP is feasible:
\begin{subequations}
\label{eq:altern_u}
\begin{align}
     \find_v \quad &v(0, x) \geq 1 & & \forall x \in X_0 \label{eq:altern_u_init}\\
    &v(T, x) \leq 0 & & \forall x \in X_1 \label{eq:altern_u_term}\\
    &\Lie_u v(t, x) \geq 0 & & \forall (t, x, u) \in [0, T] \times X \times U \label{eq:altern_u_lie}\\
    &v(t, x) \in C^1([0, T] \times X). 
\end{align}
\end{subequations}
\end{thm}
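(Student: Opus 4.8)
The plan is to apply the infinite-dimensional Farkas lemma (Lemma \ref{lem:farkas}) to the measure LP \eqref{eq:connect_meas}, with the function LP \eqref{eq:altern_u} playing the role of the primal alternative \eqref{eq:farkas_primal}. First I would recast \eqref{eq:connect_meas} in the standard conic form $\A^\dagger(\bm{x}) = \bm{b}$, $\bm{x} \in \ks^*$. Here the decision vector is $\bm{x} = (\mu_0, \mu_T, \mu)$, living in the dual cone $\ks^* = \Mp{X_0} \times \Mp{X_1} \times \Mp{[0,T]\times X\times U}$; the constraint vector collects the Liouville equation \eqref{eq:connect_liou} together with the normalization \eqref{eq:connect_prob}, so that $\bm{b}$ pairs the "$1$" with the $\mu_0$-mass functional. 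The dual (test-function) space $\ys$ is then $C^1([0,T]\times X)\times \R$, whose elements are pairs $(v, \lambda)$; the map $\A$ sends $(v,\lambda)$ to the triple of continuous functions it induces under the duality pairings appearing in \eqref{eq:connect_meas}, namely $x\mapsto \lambda + v(0,x)$ on $X_0$ (from $\delta_0\otimes\mu_0$ and the normalization), $x\mapsto -v(T,x)$ on $X_1$ (from $\delta_T\otimes\mu_T$), and $(t,x,u)\mapsto \Lie_u v(t,x)$ on $[0,T]\times X\times U$ (from $\pi^{tx}_\#\Lie_u^\dagger\mu$). With this identification, the primal alternative \eqref{eq:farkas_primal} reads: find $(v,\lambda)$ with $\lambda = -1$ (this is the $\inp{\bm{y}}{\bm{b}} = -1$ condition, since $\bm{b}$ only sees the normalization component) such that the three induced functions are nonnegative, i.e. $\lambda + v(0,x)\geq 0$ on $X_0$, $-v(T,x)\geq 0$ on $X_1$, and $\Lie_u v(t,x)\geq 0$ on $[0,T]\times X\times U$. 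Substituting $\lambda = -1$ yields exactly \eqref{eq:altern_u_init}--\eqref{eq:altern_u_lie}.

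The second step is to check the hypotheses of Lemma \ref{lem:farkas}: that $\A$ is a continuous linear map between the chosen locally convex spaces, and — the crucial point — that $\A^\dagger(\ks^*)$ is weak-star closed. Continuity of $\A$ is routine: each of the three component maps is built from evaluation at fixed times and from $\Lie_u$, which are continuous from $C^1$ into the relevant $C^0$ spaces (using compactness of $X_0, X_1, X, U$ and $[0,T]$). Linearity is immediate. For the closedness condition I would invoke compactness of the sets $X_0$, $X_1$, $X$, $U$, and $[0,T]$ (Assumptions A1, A2): on compact domains the relevant cones of nonnegative measures are weak-star compact after normalization, and a standard argument (as used in the occupation-measure literature, e.g. the references \cite{HK14, lasserre2008nonlinear} already cited) shows the image $\A^\dagger(\ks^*)$ is weak-star closed. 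I expect this verification of weak-star closedness to be the main obstacle — it is the one genuinely analytic ingredient, and one must be careful that marginalizing out $u$ via $\pi^{tx}_\#$ does not destroy closedness; the fix is that the masses of all measures involved are uniformly bounded (by $1$ and by $T$ respectively, via the Liouville consequences noted after \eqref{eq:liou}), so a limiting sequence of dual images has subsequential weak-star limits that remain in the cone.

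Once the lemma applies, the conclusion is that exactly one of \eqref{eq:connect_meas} (the $\A^\dagger$-form) and \eqref{eq:altern_u} (the primal-alternative form) is feasible, which is precisely the stated "infeasible if and only if feasible." I would close by remarking that the sign conventions work out: the normalization $\mu_0(X_0) = 1$ is what forces the Farkas pairing value to be $-1$ rather than $0$, so that the alternative produces the strict separation $v(0,x)\geq 1$ on $X_0$ versus $v(T,x)\leq 0$ on $X_1$ rather than a trivial $v\equiv 0$; and that the Lie-derivative inequality \eqref{eq:altern_u_lie} is the exact transpose of the Liouville operator $\pi^{tx}_\#\Lie_u^\dagger$ appearing in \eqref{eq:connect_liou}, via the adjoint relation $\inp{\Lie_u v}{\mu} = \inp{v}{\pi^{tx}_\#\Lie_u^\dagger\mu}$ built into \eqref{eq:liou_u_int}.
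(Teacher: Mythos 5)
Your proposal is correct and follows essentially the same route as the paper: cast \eqref{eq:connect_meas} in the standard conic form of Lemma \ref{lem:farkas} with $\bm{x}=(\mu_0,\mu_T,\mu)$, $\ks^*=\Mp{X_0}\times\Mp{X_1}\times\Mp{[0,T]\times X\times U}$, the Liouville constraint paired with the mass normalization, and then read off the alternative program with $\bm{y}=(v,\lambda)$, $\lambda=-1$, recovering \eqref{eq:altern_u_init}--\eqref{eq:altern_u_lie}. One small but worthwhile addition: you explicitly flag the weak-star closedness hypothesis of Lemma \ref{lem:farkas} (and sketch why it holds via compactness and uniform mass bounds), whereas the paper's proof leaves this unverified.
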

\begin{proof}
Problem \eqref{eq:connect_meas} has the form of the conic duality program in \eqref{eq:farkas_dual}, with the following parameters:
\begin{subequations}
\begin{align}
    \bm{x} & = [\mu_0, \ \mu_1, \ \mu] \\
    \ks^* &= \Mp{X_0} \times \Mp{X_1} \times \Mp{[0, T] \times X \times U} \\
    \bm{b} &= [0, \ 1] \\
    \A(\bm{x}) & = [-\delta_T \otimes \mu_T + \delta_0 \otimes \mu_0 + \pi^{tx}_\# \Lie_u^\dagger \mu, \ \mu_0(X_0)].
\end{align}
\end{subequations}

The alternative program from \eqref{eq:farkas_primal} is derived as
\begin{subequations}
\begin{align}
    \bm{y} & = [v, \ \gamma] \\
    \inp{\bm{y}}{\bm{b}} &= \gamma = -1 \\
    \ys &= C^1([0, T] \times X) \times \R \\ 
    \ks &= C_+(X_0) \times C_+(X_1) \times C_+([0, T] \times X \times U) \\
    \A^\dagger(\bm{y}) & = [v(0, x) + \gamma, -v(T, x), \Lie_u v].
\end{align}
\end{subequations}

This application of Farkas' Lemma \ref{lem:farkas} completes the proof.
\end{proof}

The function $v(t,x)$ starts out positive \eqref{eq:altern_u_init} and increases along all controlled trajectories corresponding to admissible inputs $u \in U$ \eqref{eq:altern_u_lie}. Because $v \leq 0$ in $X_1$ \eqref{eq:altern_u_term}, no trajectory starting from $X_0$ will reach $X_1$ for any control $u$. A function $v$ satisfying \eqref{eq:altern_u} is a certification of infeasibility for LP \eqref{eq:connect_meas} between times $t \in [0, T]$. The level set $v(t,x)=0$ is a time-dependent barrier function separating $X_0$ (negative)  and $X_1$ (nonnegative). 

\begin{prop}
In case $X_0$ and $X_1$ are single points $(x_0, x_1)$, LP \eqref{eq:altern_u} has a simpler form:
\begin{subequations}
\label{eq:altern_u_pt}
\begin{align}
    \find_v \quad &v(0, x_0) \geq 1 \qquad v(T, x_1) \leq 0& \label{eq:altern_u_pt_end}\\
    &\Lie_u v(t, x) \leq 0 & \forall (t, x, u) \in [0, T] \times X \times U \label{eq:altern_u_pt_traj}\\
    &v(t, x) \in C^1([0, T] \times X).
\end{align}
\end{subequations}
\end{prop}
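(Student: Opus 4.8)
The plan is to rerun the Farkas construction behind \eqref{eq:altern_u} with $X_0=\{x_0\}$ and $X_1=\{x_1\}$, and then tidy the resulting certificate by an additive normalization. First I would specialize the primal LP \eqref{eq:connect_meas}: since $\mu_0\in\Mp{\{x_0\}}$ and $\mu_0(X_0)=1$, necessarily $\mu_0=\delta_{x_0}$; since $\mu_T\in\Mp{\{x_1\}}$ we have $\mu_T=m\,\delta_{x_1}$ for some $m\ge 0$, and pairing the continuity equation \eqref{eq:connect_liou} with the constant test function $v\equiv 1$ (so that $\Lie_u 1\equiv 0$) forces $m=\mu_0(X_0)=1$, hence $\mu_T=\delta_{x_1}$. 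Thus \eqref{eq:connect_meas} degenerates to the single equality $\pi^{tx}_\#\Lie_u^\dagger\mu=\delta_T\otimes\delta_{x_1}-\delta_0\otimes\delta_{x_0}$ over $\mu\in\Mp{[0,T]\times X\times U}$.

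Next I would invoke Lemma~\ref{lem:farkas} for this reduced conic equality with the data of the \eqref{eq:altern_u} derivation: $\A^\dagger\mu=\pi^{tx}_\#\Lie_u^\dagger\mu$, $\ks^*=\Mp{[0,T]\times X\times U}$, $\bm b=\delta_T\otimes\delta_{x_1}-\delta_0\otimes\delta_{x_0}$, and on the dual side $\ys=C^1([0,T]\times X)$, $\A v=\Lie_u v$, $\ks=C_+([0,T]\times X\times U)$, the weak-star closedness hypothesis being inherited. The alternative \eqref{eq:farkas_primal} then produces $v\in C^1([0,T]\times X)$ with $\inp{v}{\bm b}=v(T,x_1)-v(0,x_0)=-1$ and $\Lie_u v\in C_+$, i.e. the single affine condition $v(0,x_0)-v(T,x_1)=1$ together with the pointwise Lie-derivative condition \eqref{eq:altern_u_lie}, carried over since $X$ and $U$ are untouched and appearing as \eqref{eq:altern_u_pt_traj}. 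To convert the affine equality into the two scalar inequalities \eqref{eq:altern_u_pt_end}, I would shift: $\tilde v:=v-v(T,x_1)$ has $\Lie_u\tilde v=\Lie_u v$, $\tilde v(T,x_1)=0\le 0$ and $\tilde v(0,x_0)=1\ge 1$, so it is feasible for \eqref{eq:altern_u_pt}; conversely, any $v$ feasible for \eqref{eq:altern_u_pt} has $a:=v(0,x_0)-v(T,x_1)\ge 1>0$, and $(v-v(T,x_1))/a$ solves the Farkas alternative, which by Lemma~\ref{lem:farkas} certifies infeasibility of \eqref{eq:connect_meas}. This makes \eqref{eq:altern_u_pt} feasible exactly when \eqref{eq:altern_u} is — more compactly, \eqref{eq:altern_u_pt} is just \eqref{eq:altern_u} evaluated at the singletons, so \eqref{eq:altern_u_init}--\eqref{eq:altern_u_term} collapse to the point evaluations \eqref{eq:altern_u_pt_end} while the additive-constant step absorbs the equality-versus-inequality bookkeeping.

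I expect the only delicate point to be this normalization — checking that the single affine equality delivered by Farkas is equi-feasible with the pair of inequalities \eqref{eq:altern_u_pt_end}, using that constants lie in the kernel of $\Lie_u$ and that $v(0,x_0)-v(T,x_1)$ is automatically positive on every point feasible for \eqref{eq:altern_u_pt}. I would also verify that the reduction keeps intact the barrier reading of $v$ (separating $x_0$ from $x_1$ along all admissible single-integrator trajectories), which, as in the non-singleton case, rests on the horizon obeying $T\ge T_X$ so that any connecting trajectory is seen within $[0,T]$; the remaining ingredients — the push-forward identity and the Dirac pairings $\inp{v}{\delta_T\otimes\delta_{x_1}}=v(T,x_1)$ — are routine.
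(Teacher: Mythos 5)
Your derivation is sound, but it proves a \emph{corrected} version of the statement and you never say so. Running Farkas on the reduced primal $\pi^{tx}_\#\Lie_u^\dagger\mu=\delta_T\otimes\delta_{x_1}-\delta_0\otimes\delta_{x_0}$, $\mu\in\Mp{[0,T]\times X\times U}$ gives $\A(v)=\Lie_u v\in\ks=C_+([0,T]\times X\times U)$, i.e.\ $\Lie_u v\ge 0$, exactly as in \eqref{eq:altern_u_lie}. But \eqref{eq:altern_u_pt_traj} as written says $\Lie_u v\le 0$. Those two inequalities are not equivalent, and your proof cannot produce the latter: with $\Lie_u v\le 0$ a trajectory may drive $v$ from $v(0,x_0)\ge 1$ down to $v(T,x_1)\le 0$, so the stated sign fails even to be a barrier. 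The sign in \eqref{eq:altern_u_pt_traj} (and the remark that follows, which normalizes to $v(0,x_0)=-1$, $v(T,x_1)=1$) is almost certainly a typo; you should have flagged the discrepancy rather than silently citing \eqref{eq:altern_u_lie} and treating \eqref{eq:altern_u_pt_traj} as if it had the same sign.

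On the route: the paper gives no proof; the intended reading of the proposition is a direct substitution of $X_0=\{x_0\}$, $X_1=\{x_1\}$ into \eqref{eq:altern_u}, under which \eqref{eq:altern_u_init}--\eqref{eq:altern_u_term} collapse to the two point evaluations and the Lie constraint is unchanged — nothing more. You do observe this at the very end, but the bulk of your argument re-derives Farkas for the degenerated primal (using $v\equiv 1$ to force $\mu_T(\{x_1\})=1$) and then reconciles the affine equality $v(0,x_0)-v(T,x_1)=1$ with the two inequalities via an additive shift and scaling. That detour is instructive — it explains where the simpler form comes from and why the equality and inequality formulations are interchangeable, using that constants lie in the kernel of $\Lie_u$ — but it is heavier machinery than the statement demands, and it reopens a hypothesis the direct substitution does not: you invoke Lemma~\ref{lem:farkas} for the reduced conic equality and merely assert that weak-star closedness of $\A^\dagger(\ks^*)$ is ``inherited.'' That closedness does not follow automatically from the original instance (the cone $\ks^*$ and the operator $\A^\dagger$ both change), and since the proposition can be proved without re-running Farkas at all, it is cleaner to avoid taking on that unverified obligation.
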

The inequalities in \eqref{eq:altern_u_pt_end} may be replaced with equalities such as $v(0, x_0) = -1, \ v(T, x_1) = 1$ after appropriate scaling. These (in)equalities are point evaluations of the function $v$.

\begin{cor}    
    LP \eqref{eq:altern_u} may be restricted to strict inequalities without introduction of conservatism:
    \begin{subequations}
\label{eq:altern_u_strict}
\begin{align}
     \find_v \quad &v(0, x) \geq 1 & & \forall x \in X_0 \label{eq:altern_u_strict_init}\\
    &v(T, x) \leq 0 & & \forall x \in X_1 \label{eq:altern_u_strict_term}\\
    &\Lie_u v(t, x) \geq 0 & & \forall (t, x, u) \in [0, T] \times X \times U \label{eq:altern_strict_u_lie}\\
    &v(t, x) \in C^1([0, T] \times X). 
\end{align}
\end{subequations}
\end{cor}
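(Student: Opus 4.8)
The plan is to show that the feasible set of \eqref{eq:altern_u} equals the feasible set of \eqref{eq:altern_u_strict}; since one inclusion is immediate (every strict solution is a non-strict solution), the content is the reverse inclusion. Given a $v \in C^1([0,T]\times X)$ satisfying the non-strict system \eqref{eq:altern_u}, I want to perturb it to a $\tilde v$ that still solves \eqref{eq:altern_u} but now with strict inequalities in \eqref{eq:altern_u_strict_init}--\eqref{eq:altern_strict_u_lie}. Wait — reading the corollary statement carefully, the inequalities displayed in \eqref{eq:altern_u_strict} are written the same way as in \eqref{eq:altern_u}; the intended reading (consistent with Appendix \ref{app:strict}, which the introduction says ``certifies that the barrier function program may be expressed using strict inequalities without introduction of conservatism'') is that $v(0,x) \geq 1$ becomes $v(0,x) > 1$ (or equivalently some strict margin), $v(T,x) \le 0$ becomes $v(T,x) < 0$, and $\Lie_u v \ge 0$ becomes $\Lie_u v > 0$ on the respective compact domains. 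So the goal is: feasibility of \eqref{eq:altern_u} implies existence of a $v$ satisfying all three conditions strictly.

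The key steps, in order. First I would handle the Lie-derivative inequality \eqref{eq:altern_u_lie}, which is the delicate one because $\Lie_u v$ must be made \emph{strictly} positive uniformly over the compact set $[0,T]\times X \times U$. The idea is to add a term that grows in $t$: set $v_1(t,x) = v(t,x) + \varepsilon\, t$ for small $\varepsilon>0$. Then $\Lie_u v_1 = \Lie_u v + \varepsilon \ge \varepsilon > 0$ on $[0,T]\times X\times U$, so \eqref{eq:altern_strict_u_lie} holds strictly. This perturbation only helps condition \eqref{eq:altern_u_init} at $t=0$ (unchanged, since $\varepsilon\cdot 0 = 0$, so $v_1(0,x)=v(0,x)\ge 1$) but it pushes $v_1(T,x)$ up by $\varepsilon T$, potentially violating \eqref{eq:altern_u_term}. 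Second, to restore the terminal condition I rescale and shift: note that if $v_1$ satisfies $v_1(0,x)\ge 1$ on $X_0$, $v_1(T,x)\le \varepsilon T$ on $X_1$, and $\Lie_u v_1 \ge \varepsilon$, then for any constant $c$ the function $v_1 - c$ has the same Lie derivative and shifts both endpoint values down by $c$; and for any $\lambda>0$, $\lambda v_1$ scales everything by $\lambda$. Choosing first $c$ so that $v_1 - c \le -1$ on $X_1$ (take $c = \varepsilon T + 1$, using compactness of $X_1$ so the sup is attained) and then checking the initial condition still reads $v_1(0,x) - c \ge 1 - \varepsilon T - 1 = -\varepsilon T$, which is now possibly negative — so I instead first subtract a constant to center, then rescale by a large $\lambda$, then subtract again; concretely, one clean route is: take $w = v + \varepsilon t - \beta$ where $\beta = \sup_{x\in X_1}(v(T,x)+\varepsilon T)$, so $w(T,x) \le 0$ on $X_1$; if $\beta \le 1$ then $w(0,x) = v(0,x)-\beta \ge 1 - \beta \ge 0$, and multiplying by a large $\lambda>0$ gives $\lambda w(0,x) \ge 1$ while preserving the strict sign conditions... but this fails if $w(0,x)$ could be zero. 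The robust fix, which I expect to be the actual argument, is: strict positivity on $X_0$ and strict negativity on $X_1$ can always be arranged together with the strict Lie inequality by first adding $\varepsilon t$ for the Lie part, then observing that on the disjoint compact sets $\{0\}\times X_0$ and $\{T\}\times X_1$ the perturbed function is $\ge 1$ and $\le \varepsilon T$ respectively, so choosing $\varepsilon$ small enough that $\varepsilon T < 1$ already gives a uniform gap; then an affine reparametrization $a\cdot(\cdot) + b$ with $a>0$ maps the interval $[\varepsilon T, 1]$ to, say, $[-1, 2]$, sending the $X_1$-values strictly below $0$ and the $X_0$-values strictly above $1$, while the Lie derivative, being linear in $v$, stays strictly positive (multiplied by $a>0$). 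Third, I would verify $\tilde v := a(v+\varepsilon t)+b \in C^1([0,T]\times X)$, which is clear.

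The main obstacle is the simultaneous satisfaction of all three strict inequalities: the natural perturbation for the Lie condition (adding $\varepsilon t$) works against the terminal condition, and the natural perturbations for the endpoint conditions (additive/multiplicative constants) are harmless for the Lie condition only because it is homogeneous-affine in $v$ — so the argument hinges on sequencing these operations correctly and using compactness of $X_0$, $X_1$ and $[0,T]\times X\times U$ to get \emph{uniform} margins at each stage. I would present it as: (i) add $\varepsilon t$ with $\varepsilon T < 1$ to get strict Lie positivity and a uniform value gap between the $t=0,X_0$ slice and the $t=T,X_1$ slice; (ii) apply a positive affine map to $v$ to open both endpoint inequalities strictly; (iii) note $C^1$ regularity is preserved and invoke the theorem characterizing \eqref{eq:altern_u} to conclude that the strict program \eqref{eq:altern_u_strict} is feasible exactly when \eqref{eq:connect_meas} is infeasible, i.e. no conservatism is introduced. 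Details are deferred to Appendix \ref{app:strict}.
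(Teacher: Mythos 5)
Your proof is correct and rests on the same core idea as the paper's Appendix \ref{app:strict}: perturb $v$ by a term affine in $t$ with positive slope so that the Lie derivative becomes strictly positive on the compact set $[0,T]\times X\times U$, then exploit uniform margins from compactness of $X_0$ and $X_1$ to open the endpoint inequalities. The paper accomplishes this in a single perturbation, $\tilde v(t,x) = v(t,x) - \epsilon\bigl(1 - t/(2T)\bigr)$ with $\epsilon < \min_{x\in X_0} v(0,x)$, which shifts both endpoint values downward while adding $\epsilon/(2T)$ to the Lie derivative, whereas you split it into $v + \varepsilon t$ (with $\varepsilon T<1$) followed by a positive affine map $a(\cdot)+b$; both are valid and equivalent in substance, and you also correctly read through the typographical slip in the displayed strict program.
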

\begin{proof}
See Appendix \ref{app:strict}.
\end{proof}

\subsection{Failure of Slater's Condition}



Time-dependent barriers functions are required for the Farkas alternative condition in \eqref{eq:altern_u} to certify path-disconnectedness. If the function $v(t,x)$ was chosen to be time-independent as $v(x)$, then $\partial_t v(x) = 0$. The time-independent alternative program (with $v=B$ from \eqref{eq:barrier}) would be:
\begin{subequations}
\label{eq:altern_u_indep}
\begin{align}
     \find_v \quad &v(x) \geq 1 & & \forall x \in X_0 \label{eq:altern_u_indep_init}\\
    &v(x) \leq 0 & & \forall x \in X_1 \label{eq:altern_u_indep_term}\\
    &u \cdot \nabla_x v(x) \geq 0 & & \forall (x, u) \in  X \times U \label{eq:altern_u_indep_traj}\\
    &v(t, x) \in C^1(X).
\end{align}
\end{subequations}
The inner product in condition \eqref{eq:altern_u_indep_traj} must be nonnegative for all possible values of $u \in U$ (choices of signs), given that $0$ is an interior point of $U$ by assumption A2. The only way this nonnegativity can occur is for $\nabla_x v(x) = \mathbf{0}$, which implies that $v(x) = c$ is constant for some $c$.
Constraint \eqref{eq:altern_u_indep_init} would require that $c \geq 1$ while \eqref{eq:altern_u_indep_term} imposes that $c \leq 0$.
This is a contradiction, because a $c \in \R$ cannot simultaneously satisfy $c \geq 1$ and $c \leq 0$.
Slater's condition from Theorem 1 of \cite{prajna2005necessity} is violated because there does not exist a time independent $v(x)$ satisfying \eqref{eq:altern_u_indep}.

\section{Box Model} 
\label{sec:box}
The inequality constraint in \eqref{eq:altern_u_0_strict_lie} is posed with $2n+1$ variables $(t,x,u)$. In the case where the control set $U$ is chosen to be a box under A2 and A3, the $u$ variables may be eliminated through the methods of \cite{majumdar2014convex,  korda2014controller, miller2023robustcounterpart}. This elimination does not change the feasibility properties of finding time-dependent barrier functions, and in fact reduces the computational expense of solving \acp{SDP} arising from the barrier \ac{LP}. 
In this section we modify assumption A2 as follows:
\begin{itemize}
    \item[A2'] The control set is the unit box $U = [-1,1]^n$.
\end{itemize}

\begin{rem}
    The maximal time horizon $T_X$ from \eqref{eq:time_connected} is computed with respect to the Euclidean  ball controller. Because the unit Euclidean ball is included in the box $[-1, 1]^n$, a time-dependent barrier function from \eqref{eq:altern_u} at $T \geq T_X$ with $U=[-1, 1]^n$ will certify path-disconnectedness in time $T$.
\end{rem}




The strict Lie constraint in \eqref{eq:altern_u_0_strict_lie} under the unit-box-control restriction may be expressed as
\begin{align}
\partial_t v(t, x) +  u \cdot \nabla_x v(t, x) > 0 & & \forall (t, x, u) \in [0, T] \times X \times [-1, 1]^n.\label{eq:altern_u_0_strict_lie_box}
\end{align}
Multiplier functions $\zeta^\pm$ can be introduced to eliminate the control variables $u \in U$.
\begin{thm}
\label{thm:altern_u_box}
Given a function $v$ that satisfies \eqref{eq:altern_u_0_strict_lie_box} under assumptions A1 and A4, there exists continuous functions $\zeta^\pm$ such that
\begin{subequations}
\label{eq:altern_u_box_con}
    \begin{align}
   \find_{v, \zeta^\pm} \quad &\partial_t v(t,x) -  \sum_{i=1}^n \zeta^+_i(t, x) - \zeta^-_i(t, x)  > 0  & & \forall (t, x, u) \in [0, T] \times X\label{eq:altern_u_box_lie} \\
    & \zeta^+_i(t, x) - \zeta^-_i(t, x) = \partial_{x_i} v(t, x) & & \forall i=1..n \\
    & \zeta^+_i,\  \zeta^-_i \in C_+([0, T] \times X) & & \forall i=1..n. \label{eq:altern_u_box_con_zeta}
\end{align}
\end{subequations}
\end{thm}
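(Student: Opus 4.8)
The plan is to reduce the universally quantified constraint \eqref{eq:altern_u_0_strict_lie_box} to a single pointwise inequality involving the $\ell^1$-norm of $\nabla_x v$, and then to exhibit the multipliers $\zeta^\pm_i$ explicitly as the positive and negative parts of the partial derivatives $\partial_{x_i} v$. First I would observe that, for each fixed $(t,x)$, the map $u \mapsto \partial_t v(t,x) + u\cdot\nabla_x v(t,x)$ is affine and the cube $[-1,1]^n$ is compact, so the minimum over $u$ is attained at a vertex; since the objective separates across coordinates,
\[
\min_{u \in [-1,1]^n}\bigl(\partial_t v(t,x) + u\cdot\nabla_x v(t,x)\bigr) = \partial_t v(t,x) - \sum_{i=1}^n \abs{\partial_{x_i} v(t,x)},
\]
the minimizing vertex being $u_i = -\mathrm{sgn}(\partial_{x_i} v(t,x))$. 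Hence \eqref{eq:altern_u_0_strict_lie_box} holds for all $u \in [-1,1]^n$ if and only if $\partial_t v(t,x) - \sum_{i=1}^n \abs{\partial_{x_i} v(t,x)} > 0$ for every $(t,x) \in [0,T]\times X$, with the strict inequality carried over because the minimizing $u$ is admissible.

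Next I would define, for $i = 1,\dots,n$,
\[
\zeta^+_i(t,x) := \max\{\partial_{x_i} v(t,x),\,0\}, \qquad \zeta^-_i(t,x) := \max\{-\partial_{x_i} v(t,x),\,0\}.
\]
Since $v \in C^1([0,T]\times X)$, each $\partial_{x_i} v$ is continuous on the compact set $[0,T]\times X$, and composing with the continuous map $s \mapsto \max\{s,0\}$ shows $\zeta^\pm_i \in C_+([0,T]\times X)$, which gives \eqref{eq:altern_u_box_con_zeta}. By construction $\zeta^+_i - \zeta^-_i = \partial_{x_i} v$, establishing the middle line of \eqref{eq:altern_u_box_con}, and $\zeta^+_i + \zeta^-_i = \abs{\partial_{x_i} v}$. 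Substituting this last identity into the expression appearing in \eqref{eq:altern_u_box_lie} gives $\partial_t v(t,x) - \sum_{i=1}^n\bigl(\zeta^+_i(t,x)+\zeta^-_i(t,x)\bigr) = \partial_t v(t,x) - \sum_{i=1}^n \abs{\partial_{x_i} v(t,x)}$, which is strictly positive by the reduction of the first step. This verifies all constraints of \eqref{eq:altern_u_box_con}.

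I do not expect a substantial obstacle here: the argument is a direct computation resting on the separability of the inner minimization $\min_{u\in[-1,1]^n} u\cdot\nabla_x v$ over the box and on the elementary fact that a continuous function splits into continuous nonnegative parts. The two points worth stating carefully are (i) that the equivalence in the first step preserves the \emph{strict} inequality — which it does precisely because $[-1,1]^n$ is compact, so the minimizing vertex is attained rather than merely approached — and (ii) that the decomposition $\partial_{x_i} v = \zeta^+_i - \zeta^-_i$ into nonnegative \emph{continuous} multipliers genuinely uses $v\in C^1$, since mere differentiability of $v$ would not guarantee continuity of $\zeta^\pm_i$. One may also remark that the multipliers are non-unique: adding any common nonnegative continuous function to $\zeta^+_i$ and $\zeta^-_i$ preserves the equality constraint and the cone membership while only tightening the left-hand side of \eqref{eq:altern_u_box_lie}, and the choice above is the pointwise-minimal one, which is why it makes the strict inequality immediate.
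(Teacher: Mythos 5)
Your proof is correct, and it takes a genuinely different route from the paper's. The paper discharges the claim by appeal to Theorems~3.2 and~3.3 of \cite{miller2023robustcounterpart}, which are general results on eliminating robust constraints over parameter-independent compact control sets (Theorem~3.2 giving existence of possibly discontinuous multipliers, Theorem~3.3 upgrading to continuity under a strict inequality). You instead exploit the very special structure here — single-integrator dynamics, box control set — to make everything explicit: the robust constraint over $u \in [-1,1]^n$ collapses to the pointwise condition $\partial_t v - \norm{\nabla_x v}_1 > 0$ because the affine objective separates coordinate-wise and attains its minimum at a vertex, and the multipliers are simply the positive and negative parts $\zeta^\pm_i = \max\{\pm\partial_{x_i} v, 0\}$, which inherit continuity from $v \in C^1$ via $\max\{\cdot,0\}$. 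The identities $\zeta^+_i - \zeta^-_i = \partial_{x_i} v$ and $\zeta^+_i + \zeta^-_i = \abs{\partial_{x_i} v}$ then close the argument immediately. What your approach buys is a self-contained, elementary proof with a concrete formula for the multipliers, at the cost of not immediately generalizing to vector fields that are nonlinear in $u$ or to non-box control sets — precisely the generality the cited theorems are designed for. Your side remarks — that attainment on the compact cube is what preserves strictness, and that the chosen multipliers are pointwise minimal — are both accurate and clarify why the construction works with no slack. One cosmetic note: the display in \eqref{eq:altern_u_box_lie} is typeset ambiguously (and spuriously quantifies over $u$); you read it, correctly, as $\partial_t v - \sum_{i=1}^n\bigl(\zeta^+_i + \zeta^-_i\bigr) > 0$ over $(t,x) \in [0,T]\times X$, which is the only reading consistent with the intent and with the SOS version \eqref{eq:altern_u_box_sos_lie}.
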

\begin{proof}
    Equivalence of \eqref{eq:altern_u_0_strict_lie_box} and \eqref{eq:altern_u_box_con} holds (with possibly discontinuous $\zeta$) by Theorem 3.2 of \cite{miller2023robustcounterpart}, given that the description of the control set $U$ is $(t, x)$-independent, the dynamics $\dot{x}=u$ are Lipschitz, and all sets from A1 are compact with $T$ finite. Continuity of the $\zeta$ functions are ensured by Theorem 3.3 of \cite{miller2023robustcounterpart}, due to the strictness of the inequality in the Lie constraint \eqref{eq:altern_u_0_strict_lie_box}.
\end{proof}


\section{SDP-based Certification}
\label{sec:sos}

This section will present \ac{SOS}-based finite truncations to the infinite-dimensional disconnectedness \acp{LP} in \eqref{eq:altern_u} (with and without the box-constraint in  \eqref{eq:altern_u_box_con}).

\subsection{Preliminaries of SOS methods}

A polynomial $p \in \R[x]$ is \ac{SOS} if there exists a set of polynomials $\{q_j(x)\}_{j=1}^N$ such that $p(x) = \sum_{j=1}^N q_j(x)^2$. The cone of \ac{SOS} polynomials is $\Sigma[x] \subset \R[x],$ and the subset of \ac{SOS} polynomials of degree up to $2d$ is $\Sigma_{2d}[x] \subset \R_{\leq 2d}[x]$. 
To each \ac{SOS} polynomial $p \in \Sigma[x]$, there exists a polynomial vector $z(x) \in \R[x]^s$, and \iac{PSD} \textit{Gram} matrix $Q \in \psd_+^{s}$ such that $p(x) = z(x)^T Q z(x)$. Given a matrix decomposition $Q = R^T R$, the square-sum factors may be expressed as $q(x) = R z(x)$. Determination if a degree-2$d$ polynomial is \ac{SOS} may be solved through \iac{SDP}. When $z(x)$ is the vector of all monomials from degrees $1..d$, the Gram matrix $Q$ has size $\binom{n+d}{d}$. For a given $p(x)$, computing a $Q$ such that $p(x) = z(x)^T Q z(x)$ in the monomial basis requires $\binom{n+2d}{2d}$ {coefficient matching} equality constraints.

A basic semialgebraic set is described by a finite number of bounded-degree polynomial equality and inequality constraints. An example of such a set is $\K = \{x \mid g_i(x) \geq 0, h_j(x) = 0, \forall i = 1..N_g, j=1..N_h\}$. A sufficient condition for a polynomial $p$ to be nonnegative over $\K$ is if there exists multipliers $(\sigma, \mu)$ such that
\begin{equation}
\label{eq:putinar}
    \begin{aligned}
        & p(x) = \sigma_0(x) + \textstyle \sum_i {\sigma_i(x)g_i(x)} + \sum_j {\mu_j(x) h_j(x)}\\
        & \sigma_0(x) \in \Sigma[x] \qquad \sigma_i(x) \in \Sigma[x] \qquad \mu_j \in \R[x].
    \end{aligned}
\end{equation}
When the degree of each term $\sigma_0$, $\sigma_i g_i$, $\mu_j h_j$ in \eqref{eq:putinar} does note exceed $2d$,
the cone of polynomials $p$ that have such a representation is denoted $\Sigma_{2d}[\K]$. It is called the truncated quadratic module, or sometimes the \ac{WSOS} cone, associated to the representation of $\K$. This notation is convenient but possibly misleading, as the cone $\Sigma_{2d}[\K]$ depends explicitly on the polynomials $g$ and $h$ used to describe $\K$. If the same set $\K$ is described by different polynomials, then the cone $\Sigma_{2d}[\K]$ may change.
If $\K$ is bounded and there exists a constant $R> 0 $ such that $R-\norm{x}_2^2 \in \Sigma_{2d}[\K]$ for sufficient large degree $d$, the representation of $\K$ is said to be Archimedean \cite{putinar1993compact}. 

\subsection{SOS Tightenings of Path-Disconnectedness}

An assumption is required to utilize the polynomial optimization framework for path-disconnectedness.
\begin{itemize}
    \item[A3] All the sets $X_0, X_1, X, U$ have Archimedean representations.
\end{itemize}

The box $U=[-1, 1]^n$ from assumption A2 satisfies the Archimedean requirement in assumption A3.

The \ac{SOS} tightening of degree $2d$ of the disconnectedness program in \eqref{eq:altern_u} is
        \begin{subequations}
\label{eq:altern_u_sos}
    \begin{align}
  \find_{v} \quad  & v(0, x) -1 \in \Sigma_{2d}[X_0] \label{eq:altern_u_sos_1}\\
    & -v(T, x) \in \Sigma_{2d}[X_1] \label{eq:altern_u_sos_2}\\
    &\partial_t v(t, x) +  u \cdot \nabla_x v(t, x) \in \Sigma_{2d}[[0, T] \times X \times U]  \label{eq:altern_u_sos_3}\\
    & v(t, x) \in \R_{2d}[t, x].
\end{align}
\end{subequations}

\begin{thm}
\label{thm:sos_disconnected}
    Assuming that $(X_0, X_1)$ are path-disconnected in $X$ and that A1-A3
hold, program \eqref{eq:altern_u_sos} finds a time-dependent barrier function $v$ as the degree $d$ tends to $\infty$.
\end{thm}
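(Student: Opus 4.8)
The plan is to combine three ingredients: (i) the exact (necessary-and-sufficient) characterization of path-disconnectedness by a continuous $C^1$ time-dependent barrier function from Theorem~\ref{thm:altern_u} (equivalently its strict version, the Corollary on strict inequalities), (ii) a polynomial-approximation step showing that the $C^1$ barrier can be replaced by a polynomial that still satisfies the strict inequalities, and (iii) Putinar's Positivstellensatz under the Archimedean assumption A3 to convert the strict polynomial inequalities into weighted-SOS membership of some finite degree $2d$. I would structure the argument as a chain of implications: disconnectedness $\Rightarrow$ existence of a feasible $C^1$ barrier $v$ for the strict LP~\eqref{eq:altern_u_strict} $\Rightarrow$ existence of a \emph{polynomial} $\hat v$ feasible for the strict LP $\Rightarrow$ feasibility of the SOS program~\eqref{eq:altern_u_sos} at some finite degree.

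First I would invoke the Corollary/Theorem~\ref{thm:altern_u} to obtain a $v \in C^1([0,T]\times X)$ with $v(0,\cdot) \ge 1$ on $X_0$, $v(T,\cdot) \le 0$ on $X_1$, and $\Lie_u v > 0$ on the compact set $[0,T]\times X\times U$; by compactness the Lie-derivative inequality holds with a uniform margin $\varepsilon > 0$. Next I would use Appendix~\ref{app:poly_approx} (cited in the introduction as proving that the barrier certificate can be taken polynomial): approximate $v$ by a polynomial $\hat v$ in the $C^1$ norm on $[0,T]\times X$. Since the constraints~\eqref{eq:altern_u_sos_1}--\eqref{eq:altern_u_sos_3} only involve $\hat v$, its first partials, and the evaluations at $t=0,T$, and since all these depend continuously (indeed linearly) on $\hat v$ in the $C^1(\cdot)$ topology, a sufficiently close polynomial $\hat v$ satisfies $\hat v(0,\cdot) - 1 > 0$ on $X_0$, $-\hat v(T,\cdot) > 0$ on $X_1$ (here I should handle the degenerate boundary case, or simply perturb by a small constant and rescale as allowed since the inequalities are non-strict on one side — a harmless relabeling of $v \mapsto v + \eta$ keeps $v(0,\cdot)\ge 1$ after rescaling), and $\partial_t \hat v + u\cdot\nabla_x \hat v > \varepsilon/2 > 0$ on $[0,T]\times X\times U$.

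Then I would apply Putinar's Positivstellensatz three times, once to each of the three strictly positive polynomials, using A3 (Archimedean representations of $X_0$, $X_1$, and of $[0,T]\times X\times U$ — the product of Archimedean representations is Archimedean): each strictly positive polynomial on its respective basic semialgebraic set lies in the corresponding truncated quadratic module $\Sigma_{2d_i}[\cdot]$ for some finite $d_i$. Taking $d = \max_i d_i$ (and noting the quadratic modules are nested in $d$) gives feasibility of~\eqref{eq:altern_u_sos} at degree $2d$, with decision-variable degree $\deg \hat v$; enlarging $d$ further if needed to accommodate $\deg \hat v$ completes the argument, and since \eqref{eq:altern_u_sos} is nested in $d$ it remains feasible for all larger degrees.

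The main obstacle is the polynomial-approximation step, i.e. making Appendix~\ref{app:poly_approx} do its job cleanly: one must ensure that $C^1$-closeness of $\hat v$ to $v$ on $[0,T]\times X$ transfers to the \emph{simultaneous} preservation of all three constraints, and in particular that the endpoint constraints — which are only non-strict ($\ge 1$, $\le 0$) in~\eqref{eq:altern_u}, hence not robust to small perturbations on their own — are handled, either by first passing to the strict version via the Corollary (with the margin recovered on the compact sets $X_0$, $X_1$) or by the constant-shift-and-rescale trick. A secondary point worth stating carefully is that Stone--Weierstrass/Jackson-type density in $C^1$ holds on the compact semialgebraic domain $[0,T]\times X$ (e.g. after extending $v$ to a neighborhood, or arguing on a box containing $X$), which is exactly what Appendix~\ref{app:poly_approx} is there to supply. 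Everything else is a routine application of compactness and Putinar's theorem.
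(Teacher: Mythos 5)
Your proposal follows essentially the same route as the paper's proof, which is a one-paragraph chain: Theorem~\ref{thm:poly_v} (Appendix~\ref{app:poly_approx}) supplies a polynomial $V$ feasible for the strict LP~\eqref{eq:altern_u_0_strict}, and Putinar's Positivstellensatz under A3 then places each strictly positive polynomial in its truncated quadratic module at large enough degree. You simply expand the first step — Farkas $\Rightarrow$ $C^1$ barrier $\Rightarrow$ strict version $\Rightarrow$ polynomial approximation — whereas the paper compresses all of that into a citation of Theorem~\ref{thm:poly_v}.

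One small imprecision worth flagging: a constant shift $v \mapsto v + \eta$ (your ``relabeling'') leaves $\Lie_u v$ unchanged, so it cannot by itself recover strictness in the Lie constraint, and moving the two endpoint constraints in the same direction cannot make both of them strict simultaneously. What Appendix~\ref{app:poly_approx} actually uses is the \emph{time-dependent} shift $v(t,x) - \delta\bigl(1 - t/(2T)\bigr)$, whose partial $\partial_t$ contributes $+\delta/(2T)$ to the Lie derivative and whose evaluations at $t=0$ and $t=T$ shift by different amounts ($-\delta$ and $-\delta/2$), which is what lets all three inequalities become strict at once. Your argument is saved because you correctly delegate the mechanics to Appendix~\ref{app:poly_approx}, and you separately observe that the Lie margin $\varepsilon>0$ on the compact set $[0,T]\times X\times U$ is robust to a $C^1$-small polynomial perturbation; but the throwaway ``constant-shift'' description would not work if carried out literally. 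There is also a trivial additional rescaling needed to pass from $V(0,x)>0$ (the conclusion of Theorem~\ref{thm:poly_v}) to $V(0,x) - 1 > 0$ as required by~\eqref{eq:altern_u_sos_1}, which your ``rescale'' phrase covers; the paper's proof glosses over it in the same way.
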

\begin{proof}    
  When $d\to\infty$, the \ac{WSOS} constraints on the right-hand-side in \eqref{eq:altern_u_sos_1}-\eqref{eq:altern_u_sos_3}describe all possible positive polynomials over their respective sets due to the Archimedean assumption A3 \cite{putinar1993compact}. Theorem \ref{thm:poly_v} certifies that a polynomial $v$ satisfying the strict inequalities \eqref{eq:altern_u_0_strict} exists under the given assumptions. Therefore, choosing $d$ sufficiently high will recover the polynomial $v$ that certifies path-disconnectedness.

\end{proof}

Applying a box-input substitution from Theorem \ref{thm:altern_u_box} to the \ac{SOS} program in \eqref{eq:altern_u_sos} leads to 
        \begin{subequations}
\label{eq:altern_u_box_sos}
    \begin{align}
  \find_{v, \zeta^\pm} \quad  & v(0, x) -1 \in \Sigma_{2d}[X_0] \\
    & -v(T, x) \in \Sigma_{2d}[X_1] \\
    &\partial_t v(t, x) -  \sum_{i=1}^n \zeta^+_i(t, x) - \zeta^-_i(t, x) \in \Sigma_{2d}[[0, T] \times X]  \label{eq:altern_u_box_sos_lie}\\
    & \zeta^+_i(t, x) - \zeta^-_i(t, x) - \partial_{x_i} v(t, x) = 0 & & \forall i=1..n \label{eq:altern_u_box_sos_eq} \\
    & \zeta^+_i,\  \zeta^-_i \in \Sigma_{2d}[[0, T] \times X] & & \forall i=1..n \label{eq:altern_u_box_sos_zeta}\\
    & v(t, x) \in \R_{2d}[t, x].
\end{align}
\end{subequations}

\begin{thm}
\label{thm:sos_disconnected_box}
    Under the same conditions as in Theorem \ref{thm:sos_disconnected}, program \eqref{eq:altern_u_sos} finds a time-dependent barrier function as the degree $d$ tends to $\infty$. 
\end{thm}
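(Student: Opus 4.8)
The plan is to reduce Theorem \ref{thm:sos_disconnected_box} to the already-established Theorem \ref{thm:sos_disconnected} by showing that, at sufficiently high degree, a feasible point of the box-substituted program \eqref{eq:altern_u_box_sos} can be manufactured from a feasible point of \eqref{eq:altern_u_sos}, while the initial- and terminal-set constraints are literally identical in the two programs. So the only work is in the Lie-derivative block, i.e. constraints \eqref{eq:altern_u_box_sos_lie}--\eqref{eq:altern_u_box_sos_zeta}.

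First I would invoke Theorem \ref{thm:sos_disconnected}: under assumptions A1--A3 and path-disconnectedness, for $d$ large there is a polynomial $v\in\R_{2d}[t,x]$ feasible for \eqref{eq:altern_u_sos}, and moreover (via Theorem \ref{thm:poly_v} and Appendix \ref{app:strict}) this $v$ can be taken to satisfy the \emph{strict} Lie inequality $\partial_t v(t,x)+u\cdot\nabla_x v(t,x)>0$ on $[0,T]\times X\times[-1,1]^n$. Second, apply Theorem \ref{thm:altern_u_box}: strictness of this inequality yields \emph{continuous} multiplier functions $\zeta^\pm_i$ with $\zeta^+_i-\zeta^-_i=\partial_{x_i}v$ and $\partial_t v-\sum_i(\zeta^+_i-\zeta^-_i)$ (read with the correct sign pattern, i.e. the bound attained at the box vertex $u_i=-\operatorname{sign}(\partial_{x_i}v)$) strictly positive on $[0,T]\times X$; concretely $\zeta^\pm_i$ can be realized as $\tfrac12(|\partial_{x_i}v|\pm\partial_{x_i}v)$ smoothed, or simply taken from the constructive statement of Theorem 3.3 of \cite{miller2023robustcounterpart}. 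Third, these continuous $\zeta^\pm_i$ need to be replaced by polynomials lying in the WSOS cones $\Sigma_{2d}[[0,T]\times X]$: approximate $\zeta^\pm_i$ uniformly on the compact set $[0,T]\times X$ by polynomials, absorb the approximation error into the strict positivity margin of \eqref{eq:altern_u_box_sos_lie}, and perturb $v$ (or add a small slack) so that the equality \eqref{eq:altern_u_box_sos_eq} is restored exactly — since $\partial_{x_i}v$ is a polynomial, one can just define the polynomial $\zeta^-_i$ freely as an SOS approximation of the negative part and set $\zeta^+_i:=\zeta^-_i+\partial_{x_i}v$, then check $\zeta^+_i$ is nonnegative on $[0,T]\times X$ (it is, being close to $|\partial_{x_i}v|+$ a nonnegative function) and hence, by the Archimedean assumption A3 and Putinar's theorem \cite{putinar1993compact}, lies in $\Sigma_{2d'}[[0,T]\times X]$ for some possibly larger degree $d'$. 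Finally, all five polynomial identities/memberships hold at the common degree $\max(d,d')$, which is still finite, so \eqref{eq:altern_u_box_sos} is feasible for all sufficiently large degree.

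The main obstacle is the third step: the passage from continuous multipliers $\zeta^\pm_i$ to \emph{polynomial SOS-representable} ones while simultaneously keeping the linear equality \eqref{eq:altern_u_box_sos_eq} exact. Uniform polynomial approximation handles the continuity-to-polynomial part and the strict inequality \eqref{eq:altern_u_box_sos_lie} gives the slack to absorb errors, but one must be careful that the approximants are nonnegative on $[0,T]\times X$ (not merely close to nonnegative) so that Putinar's Positivstellensatz applies — this is why one should fix $\zeta^-_i$ as an SOS approximation of $\max(-\partial_{x_i}v,0)$ plus a tiny positive constant and then \emph{define} $\zeta^+_i$ algebraically, rather than approximating both independently. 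Once that asymmetry is exploited the rest is the same Archimedean/Putinar argument already used in the proof of Theorem \ref{thm:sos_disconnected}, together with the observation from the Remark that the Euclidean-ball horizon $T\geq T_X$ remains valid for the box control set.
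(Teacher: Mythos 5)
Your proposal follows the same two-stage logic as the paper's own proof: invoke Theorem~\ref{thm:sos_disconnected} to obtain a polynomial $v$ satisfying the strict Lie inequality, then produce polynomial multipliers $\zeta^\pm$ in the WSOS cones so that \eqref{eq:altern_u_box_sos} is feasible. Where the paper simply cites Theorem~\ref{eq:poly_zeta} (which in turn defers to Theorem~4.3 of \cite{miller2023robustcounterpart}), you unpack the polynomial-approximation step explicitly. That is a genuine difference in route rather than in conclusion, and your variant has one particularly nice feature: by fixing $\zeta^-_i$ first as a strictly positive polynomial approximant of $\max(-\partial_{x_i}v,0)$ and \emph{defining} $\zeta^+_i := \zeta^-_i + \partial_{x_i}v$ algebraically, you make the equality constraint \eqref{eq:altern_u_box_sos_eq} hold identically by construction, so the only thing left to verify is strict positivity of $\zeta^+_i$ on $[0,T]\times X$ (which follows by approximation) and hence WSOS membership via Putinar under the Archimedean assumption A3. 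This is essentially what the cited external theorem must be doing under the hood. Your observation that the approximation error must be absorbed by the strict slack in \eqref{eq:altern_u_box_sos_lie}, and that the small positive constant is needed for Putinar rather than mere nonnegativity, is exactly the right technical care. The one cosmetic caveat is that the degrees of $v$ and of $\zeta^\pm$ need not match, so the final degree is the maximum of the two; you note this correctly at the end. Overall your argument is a more self-contained replacement for the paper's citation to \cite{miller2023robustcounterpart}, and both yield the asymptotic conclusion.
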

\begin{proof}    
Theorem \ref{thm:sos_disconnected} certifies that there exists a polynomial $v$ that can be detected through \ac{SOS} methods. Theorem \ref{eq:poly_zeta} certifies that polynomial multipliers $\zeta$ exist under a strict Lie derivative inequality constraint. Therefore, the $u$-eliminated \ac{SOS} path-disconnected program \eqref{eq:altern_u_box_sos} will also converge a feasible certificate as the degree $d$ approaches $\infty$.
\end{proof}

\begin{rem}\label{rem:union}
    The programs in \eqref{eq:altern_u_sos} and \eqref{eq:altern_u_box_sos} may be generalized to cases where $X_0$, $X_1$, and $X$ are the unions of Archimedean basic semialgebraic sets. For example, if $X_0$ is the union $X_0 = \cup_{j=1}^{N_0} X_0^j$, then constraint \eqref{eq:altern_u_sos_1} could be formulated as,
   \begin{align}
        & v(0, x) -1 \in \Sigma_d[X_0^j]  & & \forall j \in 1..N_0.
   \end{align}
\end{rem}

\subsection{Computational Complexity}

The computational burden of solving \eqref{eq:altern_u_sos} and \eqref{eq:altern_u_box_sos} mostly depends on the size of the largest \ac{PSD} Gram matrix in any constraint of degree $2d$. Constraint \eqref{eq:altern_u_sos_3} involves the $2n+1$ variables $(t, x, u)$, leading to a Gram matrix of maximal size $\binom{2n+1+d}{d}$. The input-eliminated constraint in \eqref{eq:altern_u_box_sos_lie} has $n+1$ variables, leading to a Gram matrix of size $\binom{n+1+d}{d}$. In a case where $n=3$ and $d=6$, the maximal size \ac{PSD} matrix falls from $\binom{2*3+1+6}{6}=1716$ to $\binom{3+1+6}{6}=210$ after eliminating $u$. The per-iteration complexity of solving an \ac{SOS} \ac{SDP} derived from \eqref{eq:altern_u_box_sos} using an interior point method scales in a jointly polynomial manner as in  $O((n+1)^{6d})$ and $O(d^{4n})$ \cite{lasserre2009moments, miller2022eiv_short}.

\begin{rem}
    A direct solution to \eqref{eq:altern_u_box_sos} may require a very high degree $d$ in order to produce a certificate of path-disconnectedness. The work in \cite{cibulka2021spatio} presents a time-space partitioning scheme that decomposes $[0, T] \times X$ into a set of cells (e.g. hypercubes). Each cell $c$ has an individual barrier function $v_c$, and compatibility rules are imposed to form splines (in space) and falling transitions (in time). This partitioning scheme yields a piecewise-defined time-dependent barrier function, which could yield certificates at lower polynomial degree $d$ at the expense of additional polynomial equality and inequality constraints.
\end{rem}

\section{Experiments}

\label{sec:examples}

All experiments in this section were done with input set $U = [-1, 1]^n$ (A4). Matlab 2021a code to generate all examples is available at \url{https://github.com/jarmill/set_connected}. Dependencies include Yalmip \cite{Lofberg2004} and Mosek \cite{mosek92}.

\subsection{Univariate Example}

The first example involves a univariate and disjoint set $X = [a_1, a_2] \cup [b_1, b_2]$.

A degree $d=4$ time-dependent barrier function $v(t, x)$ is produced by \eqref{eq:altern_u_sos} for a choice of parameters $X = [0, 0.4] \cup [0.8, 1],$ $X_0 = 0.2,$ $X_1 = 0.9,$ $T=1$. The union $X$ is modeled as $X = \{x(0.4-x) \geq 0 \} \cup \{(x-0.8)(1-x) \geq 0\}$, recall Remark \ref{rem:union}. This certificate is visualized in Figure \ref{fig:univariate_1box}, in which the black contour is the $v=0$ level set separating the circle $X_0$ and the star $X_1$. The gray walls are the boundaries at $x=0.4$ and $x=0.8$. This univariate example can also be interpreted to prove disconnectedness of an annulus, in which $x$ may be interpreted as the radius.

\begin{figure}[h]
    \centering
    \includegraphics[width=0.7\linewidth]{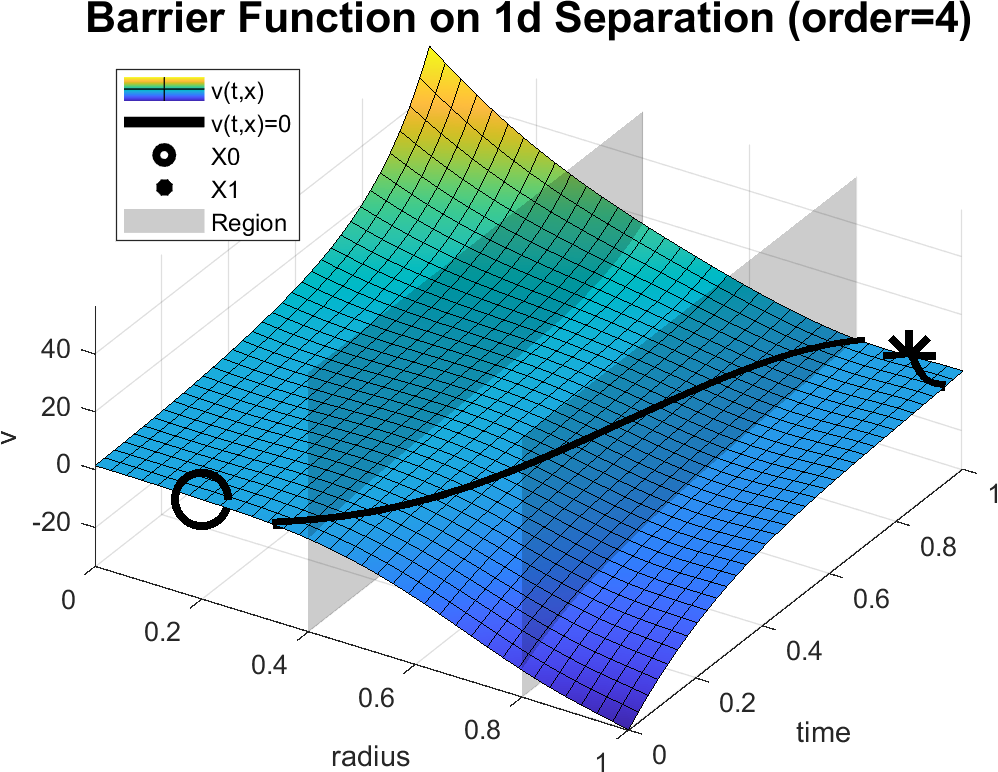}
    \caption{Univariate barrier certificate}
    \label{fig:univariate_1box}
\end{figure}

\subsection{Elliptic Curve}

The second example involves the interior of a noncompact elliptic curve:
\begin{align}
    X = \{x \in \R^n \mid x_2^2 - x_3^2 - 0.8x_1 + 0.05 \geq 0\}. \label{eq:elliptic_curve_set}
\end{align}

With $X_0 = [-0.4; 0.1], X_1 = [0.8; 0.4], T=\sqrt{2}$, the degree $2d=6$ \ac{SOS} tightening of \eqref{eq:altern_u_box_sos} yields a time-dependent barrier function certifying separation. Contours of this certificate are displayed in Figures \ref{fig:ecc-2d} and \ref{fig:ecc-3d}. Note how the circle $X_0$ and star $X_1$ are separated by the zero-level set of $v(t, x)$. Even though the set \eqref{eq:elliptic_curve_set} violates assumption A1 and A3 (the noncompact set \eqref{eq:elliptic_curve_set} is noncompact and non-Archimedean), feasibility of \eqref{eq:altern_u_box_sos} yielded a sufficient certificate of path-disconnectedness.

\begin{figure}[h]
    \centering
    \includegraphics[width=0.6\linewidth]{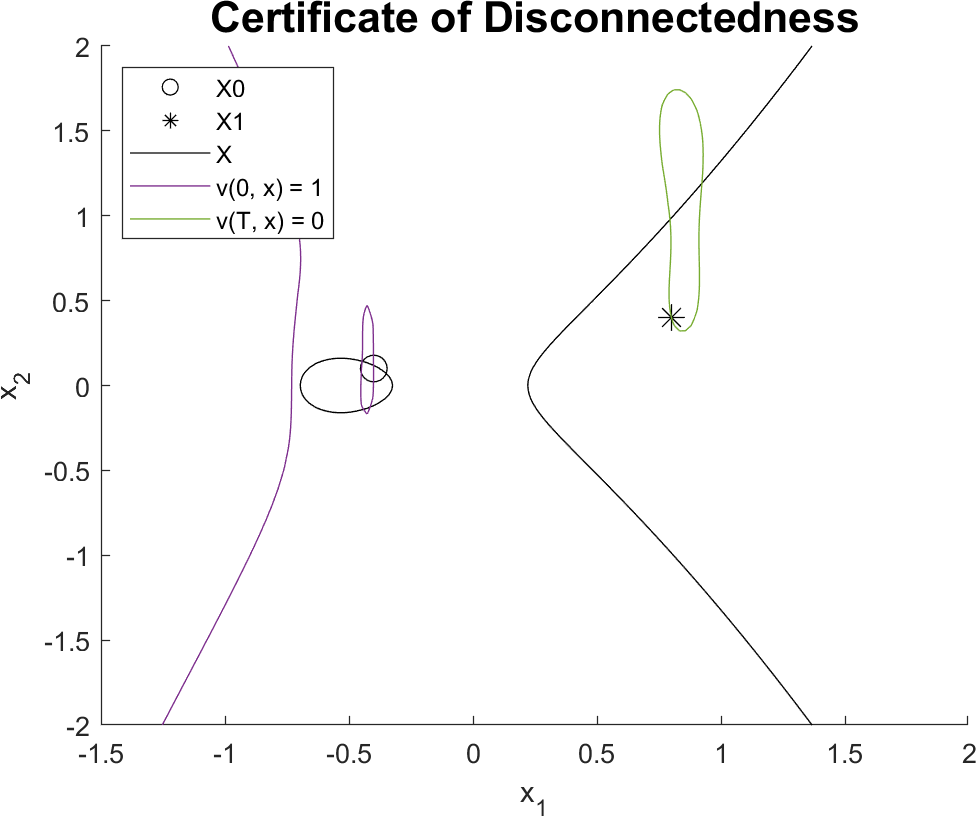}
    \caption{Elliptic curve separation contour}
    \label{fig:ecc-2d}
\end{figure}

Figure \ref{fig:ecc-3d} displays the $v=0$ level-set surface as a function of time $t$ and state $x$.

\begin{figure}[!h]
    \centering
    \includegraphics[width=0.6\linewidth]{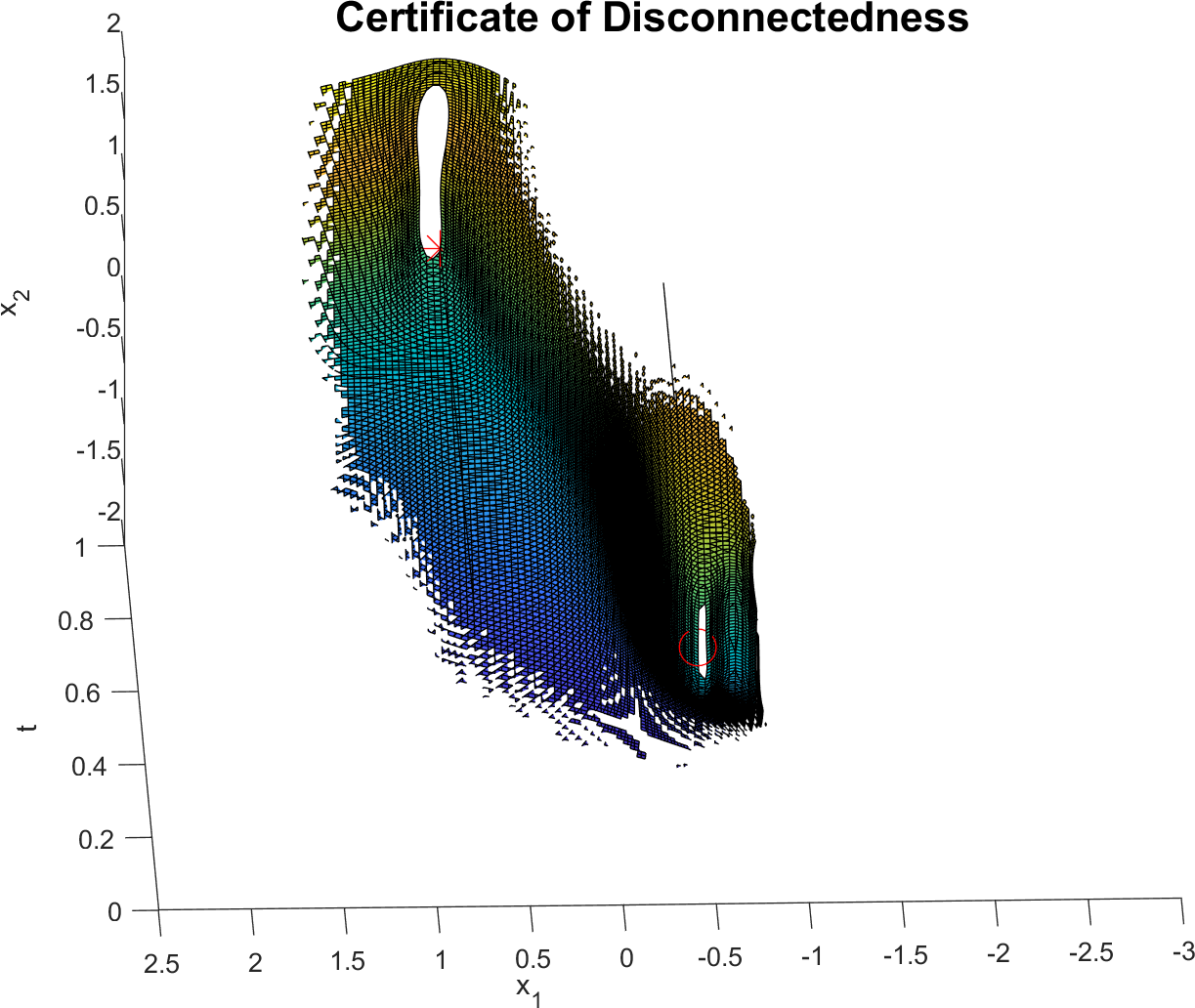}
    \caption{Elliptic curve separation contour (3d)}
    \label{fig:ecc-3d}
\end{figure}

\subsection{Other Demonstrations}

Tables \ref{tab:sys_2d} and \ref{tab:sys_3d} report successful execution of our set-disconnectedness certification scheme (degree $2d$ \ac{SOS} tightening of \eqref{eq:altern_u_box_sos}) within the set $\bar{X} = [-1, 1]^n$. Each set $X$ is defined by one additional inequality constraint: $ X = [-1, 1]^n \cap \{x \mid g(x) \geq 0\}$. 

\begin{table}[!h]
    \centering
    \begin{tabular}{l|ccccc}
        Description & $T$ & order & $g(x)$ & $X_0$ & $X_1$ \\ \hline
        horizontal cut & 2 & 1 & $0.01-(x_1)^2$ & [-0.33; -0.55] & [0.55; 0.275] \\ 
        slanted cut & $\sqrt{2}$ & 2 & $0.01-(x_1+x_2)^2$ & [-0.55; 0] & [0.55; 0.275] \\ 
        arc cut & $\sqrt{2}$ & 2  & $-((x_1-1)^2 + x_2^2 - 0.6^2)(0.4-(x_1-1)^2 - x_2^2)$ & [1; 0] & [-1; 1] \\
         hyperelliptic curve & 1 & 1 & $-y^2 - x(x+1)(x-0.8)(x+0.5)(x-0.5)$ & [-0.85; -0.2] & [0.2; 0.1]
    \end{tabular}
    \caption{Disconnectedness certificates of 2-dimensional systems}
    \label{tab:sys_2d}
\end{table}

\begin{table}[!h]
    \centering
    \begin{tabular}{l|ccccc}
        Description & $T$ & order & $g(x)$ & $X_0$ & $X_1$ \\ \hline
        expanded elliptic curve & 1 & 4 & $x^3 - 0.8 x + 0.05 - (2y^2+z^2/2)$ & [0;0;0] & [0.9; 0.2; 0.2] \\
        hollow ball  & $\sqrt{3}$ & 3 & $(\norm{x}_2^2 - 0.6^2)(0.4^2 - \norm{x}_2^2)$ & [0;0;0] & [0.9; 0.2; 0.2] \\
        hollow ellipsoid  & $\sqrt{3}$ & 2 & $(x_1^2 + 2 x_2^2 + 3x_3^2- 0.6^2)(0.4^2 - x_1^2 - 2 x_2^2 - 3x_3^2)$ & [0;0;0] & [0.9; 0.2; 0.2]
    \end{tabular}
    \caption{Disconnectedness certificates of 3-dimensional systems}
    \label{tab:sys_3d}
\end{table}






\section{Conclusion}
\label{sec:conclusion}


This work provided polynomial certificates of path-disconnectedness between given sets $X_0$ and $X_1$ inside a larger given set $X$. The connectiveness task is interpreted as a single-integrator optimal control problem that steers between $X_0$ and $X_1$. 
The path-disconnected certificates may be interpreted as time-dependent barrier functions that certify infeasibility of this optimal control problem within a specified time horizon. Upper-bounds on the true time horizon may be computed when $X$ is constructed from the union of simple sets, or when the set $X$ is described by a single polynomial inequality constraint. The control variables may be eliminated from the Lie constraint, thus improving the computational performance of \ac{SOS} barrier synthesis.

{Note also the recent work in \cite{korda2022urysohn} which provides a
polynomial certificate of non-intersection between two given semialgebraic sets $X_0$ and $X_1$. Similarly to what is done in our paper, this certificate is computed with the moment-SOS hierarchy. However, the certificate does not prove path-disconnectedness in $X$.}

This paper focused on the existential path-disconnectedness task, in which there does not exist $x_0 \in X_0, \ x_1 \in X_1$ such that $x_0$ may be connected to $x_1$ within $X$. Future work could involve the universal path-disconnected task, proving that there exists an $x_0 \in X_0, x_1 \in X_1$ such that $(x_0, x_1)$ are disconnected in $X$. Other future work includes improving the numerical conditioning of the \acp{SDP}, reducing the complexity of the Moment-\ac{SOS} programs, and obtaining necessary conditions for existence of  path-disconnectedness certificates in unbounded domains \cite{wu2023generalizing}.
\section*{Acknowledgements}

The authors would like to thank Mario Sznaier, Fred Leve, Roy S. Smith, and the POP group at LAAS-CNRS for discussions about path-disconnectedness and occupation measures.

J. Miller was partially supported by NSF grants partially supported by NSF grants  CNS--1646121, ECCS--1808381 and CNS--2038493, AFOSR grant FA9550-19-1-0005, and ONR grant N00014-21-1-2431. J. Miller was in part supported by the Chateaubriand Fellowship of the Office for Science \& Technology of the Embassy of France in the United States and by the Swiss National Science Foundation Grant 51NF40\_180545.

D. Henrion's research stays in Paris were partly supported by the EOARD-AFOSR grant agreement FA8665-20-1-7029  coordinated by Mohab Safey El Din and Emmanuel Tr\'elat.

\appendix
\section{Strict Reformulation}
\label{app:strict}

This appendix shows how to replace nonstrict inequality constraints in \eqref{eq:altern_u} with strict inequality constraints, without affecting the feasibility of finding time-dependent barrier functions. 

\begin{lem}
\label{lem:altern_u_0}
    The initial constraint \eqref{eq:altern_u_init} may be replaced by a strict inequality to form the equivalent program 
\begin{subequations}
\label{eq:altern_u_0}
\begin{align}
   \find_v \quad &v(0, x) > 0 & & \forall x \in X_0 \label{eq:altern_u_0_init}\\
    &v(T, x) \leq 0 & & \forall x \in X_1 \label{eq:altern_u_0_term}\\
    &\Lie_u v(t, x) \geq 0 & & \forall (t, x, u) \in [0, T] \times X \times U \label{eq:altern_u_0_lie}\\
    &v(t, x) \in C^1([0, T] \times X).
\end{align}
\end{subequations}
\end{lem}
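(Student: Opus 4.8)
\textbf{Proof plan for Lemma \ref{lem:altern_u_0}.}

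The plan is to show the two programs \eqref{eq:altern_u} and \eqref{eq:altern_u_0} have the same feasibility status. One direction is immediate: any $v$ feasible for \eqref{eq:altern_u_0} is feasible for \eqref{eq:altern_u}, since the strict inequality $v(0,x)>0$ on the compact set $X_0$ implies $v(0,x)\geq c$ for some $c>0$ (by continuity and compactness of $X_0$), and then rescaling $v \mapsto v/c$ yields $v(0,x)\geq 1$ on $X_0$ while preserving \eqref{eq:altern_u_0_term} and the Lie inequality \eqref{eq:altern_u_0_lie}, both of which are homogeneous under positive scaling. So the content is the converse: feasibility of \eqref{eq:altern_u} implies feasibility of \eqref{eq:altern_u_0}.

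For the converse, suppose $v$ solves \eqref{eq:altern_u}, so $v(0,x)\geq 1>0$ on $X_0$, $v(T,x)\leq 0$ on $X_1$, and $\Lie_u v \geq 0$ on $[0,T]\times X\times U$. The trivial observation is that the constraint $v(0,x)\geq 1$ is already strictly stronger than $v(0,x)>0$, so the \emph{same} $v$ satisfies all the constraints of \eqref{eq:altern_u_0}. Hence feasibility of \eqref{eq:altern_u} trivially implies feasibility of \eqref{eq:altern_u_0}, and combined with the rescaling argument above the two programs are equivalent. I would write this out carefully, emphasizing that the point of the lemma is not that strictifying costs nothing in general, but specifically that the initial constraint can be taken strict; this is the first of the incremental strictifications (the term constraint \eqref{eq:altern_u_0_term} and the Lie constraint \eqref{eq:altern_u_0_lie} being handled in subsequent lemmas of the appendix, culminating in Corollary on \eqref{eq:altern_u_strict}).

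I expect essentially no real obstacle here — the lemma is a normalization/bookkeeping step. The only thing to be careful about is the scaling direction: one must scale by a \emph{positive} constant so that the sign conditions in \eqref{eq:altern_u_0_term} and the nonnegativity in \eqref{eq:altern_u_0_lie} are preserved, and one must invoke compactness of $X_0$ together with continuity of $x\mapsto v(0,x)$ to pass from the open condition $v(0,x)>0$ to a uniform positive lower bound before rescaling. If anything warrants a sentence of justification, it is that $v\in C^1([0,T]\times X)$ restricted to $\{0\}\times X_0$ is continuous on the compact set $X_0$, hence attains its positive minimum there.
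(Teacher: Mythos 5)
Your proposal is correct and matches the paper's proof essentially verbatim: for the nontrivial direction you pass from $v(0,x)>0$ to a uniform positive lower bound $p^*=\min_{x\in X_0}v(0,x)>0$ via compactness of $X_0$ and continuity of $v$, then rescale by $1/p^*$, noting that the terminal and Lie constraints are homogeneous under positive scaling; the other direction is the trivial implication $v(0,x)\geq 1\Rightarrow v(0,x)>0$. No gaps.
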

\begin{proof}
    Consider a feasible solution $v(t, x)$ of \eqref{eq:altern_u_0}. Given that $v\in C^1$ and $X_0$ is compact, the positive minimum $p^* = \min_{x \in X_0} v(0, x) > 0$ is attained. The positively-scaled barrier function $(1/p^*)v(t, x)$ therefore satisfies all constraints of \eqref{eq:altern_u}. Similarly, any solution of \eqref{eq:altern_u} satisfies the constraints of \eqref{eq:altern_u_0}, given that $\forall x \in X_0: v(0, x) \geq 1$ implies that $\forall x \in X_0: v(0, x) > 0$. Because the two feasibility sets are equal, the result follows.
\end{proof}

\begin{lem}
\label{lem:altern_u_0_strict}
        The Lie derivative constraints in \eqref{eq:altern_u_lie} and in \eqref{eq:altern_u_0_lie} may be replaced by strict inequalities to form the equivalent program 
        \begin{subequations}
\label{eq:altern_u_0_strict}
\begin{align}
  \find_{\tilde{v}}  \quad&\tilde{v}(0, x) > 0 & & \forall x \in X_0 \label{eq:altern_u_0_strict_init}\\
    &\tilde{v}(T, x) < 0 & & \forall x \in X_1 \label{eq:altern_u_0_strict_term}\\
    &\Lie_u v(t, x) > 0 & & \forall (t, x, u) \in [0, T] \times X \times U \label{eq:altern_u_0_strict_lie}\\
    &\tilde{v}(t, x) \in C^1([0, T] \times X). 
\end{align}
\end{subequations}
\end{lem}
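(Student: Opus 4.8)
The plan is to prove the two programs have the same feasibility status by an explicit perturbation argument, so that a solver of one yields a solver of the other. One direction is immediate: any $\tilde v$ feasible for \eqref{eq:altern_u_0_strict} satisfies every constraint of \eqref{eq:altern_u_0} (in particular \eqref{eq:altern_u_0_lie} and \eqref{eq:altern_u_0_term}), because each strict inequality implies the corresponding nonstrict one. Hence it suffices to start from a feasible $v$ of \eqref{eq:altern_u_0} and construct a feasible $\tilde v$ of \eqref{eq:altern_u_0_strict}.

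Given such a $v$, I would set $\tilde v(t,x) = v(t,x) + \epsilon\,(t-T) - \eta$ for constants $\epsilon,\eta>0$ to be fixed. The affine term $\epsilon(t-T)$ is chosen to strengthen the Lie constraint: since $\partial_t[\epsilon(t-T)] = \epsilon$ and $\nabla_x[\epsilon(t-T)] = \mathbf 0$, one gets $\Lie_u \tilde v(t,x) = \Lie_u v(t,x) + \epsilon \geq \epsilon > 0$ uniformly over $(t,x,u)\in[0,T]\times X\times U$, using only $\Lie_u v \geq 0$ from \eqref{eq:altern_u_0_lie}. The constant shift $-\eta$ strengthens the terminal constraint: at $t=T$ the term $\epsilon(t-T)$ vanishes, so $\tilde v(T,x) = v(T,x) - \eta \leq -\eta < 0$ for all $x\in X_1$, using $v(T,\cdot)\leq 0$ from \eqref{eq:altern_u_0_term}. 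Moreover $\tilde v \in C^1([0,T]\times X)$ since it differs from $v$ by an affine function of $t$.

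It remains to preserve the strict initial inequality. Because $v\in C^1$ and $X_0$ is compact (assumption A1), the minimum $p^* := \min_{x\in X_0} v(0,x)$ is attained and is strictly positive by \eqref{eq:altern_u_0_init}. At $t=0$ we have $\tilde v(0,x) = v(0,x) - \epsilon T - \eta \geq p^* - \epsilon T - \eta$, so choosing $\epsilon,\eta>0$ small enough that $\epsilon T + \eta < p^*$ (e.g. $\epsilon = \eta = p^*/(2(T+1))$) guarantees $\tilde v(0,x) > 0$ on $X_0$. With this choice all three strict constraints of \eqref{eq:altern_u_0_strict} hold, which together with the trivial reverse inclusion establishes the claimed equivalence; combined with Lemma~\ref{lem:altern_u_0} this also gives equivalence with \eqref{eq:altern_u}.

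The argument is essentially routine; the only point requiring care is the simultaneous bookkeeping of the three constraints under a single perturbation — choosing a $t$-dependent term that vanishes at $t=T$ (so the terminal estimate is not spoiled) while still having strictly positive time-derivative, and then sizing $\epsilon$ and $\eta$ against the compactness-derived margin $p^*$ so the initial constraint survives. No machinery beyond compactness of $X_0$ and the $C^1$ regularity of $v$ is needed.
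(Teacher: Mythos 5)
Your proof is correct and takes essentially the same approach as the paper: you add an affine-in-$t$ perturbation to $v$ whose time-derivative is a positive constant (to strictify the Lie inequality), shift the constant term down (to strictify the terminal inequality), and size the perturbation against the compactness-derived margin $p^* = \min_{X_0} v(0,\cdot) > 0$ so the initial inequality survives. The paper uses the single-parameter shift $\tilde v(t,x) = v(t,x) - (1 - t/(2T))\epsilon$ while you use the two-parameter $v(t,x) + \epsilon(t-T) - \eta$, but these are the same family of affine perturbations; you also state the trivial converse direction explicitly, which the paper leaves implicit.
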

\begin{proof}
    Let $v$ be a solution to \eqref{eq:altern_u_0} with a positive minimum  $p^* = \min_{x \in X_0} v(0, x) > 0$ in $X_0$ (as used in the proof of Lemma \ref{lem:altern_u_0}). We can define $\tilde{v}$ in terms of an $\epsilon>0$ with    
    \begin{subequations}
    \label{eq:tilde_v_subs}
        \begin{align}
            \tilde{v}(t, x) &= v(t, x) - (1-1/(2T))\epsilon \label{eq:tilde_v_subs_orig}\\
            \intertext{under the following relations:}
            \tilde{v}(0, x) &= v(0, x) - \epsilon \\
            \tilde{v}(T, x) &= v(T, x)  - \epsilon/2\\
            u \cdot \nabla_x \tilde{v}(t, x) &= u \cdot \nabla_x v(t, x) \\
            \partial_t \tilde{v}(t, x) &= \partial_t v(t, x) + (1/(2T))\epsilon.
        \end{align}
    \end{subequations}
    Substitutions of \eqref{eq:tilde_v_subs} into the left-hand-sides of \eqref{eq:altern_u_0_strict} yield
    \begin{subequations}
    \begin{align}
 \find_v \quad    &v(0, x) - \epsilon > 0 & & \forall x \in X_0 \label{eq:altern_u_0_init_subs}\\
    &v(T, x) -\epsilon/2 < 0 & & \forall x \in X_1 \label{eq:altern_u_0_strict_term_subs}\\
    &\Lie_u v(t, x) + \epsilon/(2T)  > 0 & & \forall (t, x, u) \in [0, T] \times X \times U \label{eq:altern_u_0_strict_lie_subs}\\
    &v(t, x) \in C^1([0, T] \times X).
    \end{align}
    \end{subequations}    
    Given that $v$ satisfies the nonstrict inequality constraints from \eqref{eq:altern_u}, choosing $\epsilon < p^*$ (such as $\epsilon = p^*/2$) allows for the $\tilde{v}$ from \eqref{eq:tilde_v_subs_orig} to satisfy the strict constraints in \eqref{eq:altern_u_0_strict}. The result follows.
\end{proof}

\section{Polynomial Approximation}
\label{app:poly_approx}
This appendix shows that $(v, \zeta)$ may be chosen to be polynomials when finding path-disconnectedness certificates.

\begin{lem}
\label{lem:c1_lie_u}
    Under assumptions A1 and A4, the $C^1$ norm of $v \in C^1([0, T] \times X)$ satisfies
    \begin{align}
        \norm{v}_{C^1([0, T] \times X)}  \geq  \norm{v}_{C^0([0, T] \times X)} + \norm{\Lie_u v}_{C^0([0, T] \times X \times [-1, 1]^n).}
    \end{align}
    \end{lem}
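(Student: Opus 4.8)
The plan is to reduce the claimed inequality to the triangle inequality for the Lie derivative, combined with the observation that on the control box $[-1,1]^n$ every coordinate satisfies $\abs{u_i}\le 1$. First I would unfold the definition of the $C^1$ norm on the domain $\Omega = [0,T]\times X$, whose coordinates are $(t,x_1,\dots,x_n)$. By the definition given in the Notation subsection, the partial derivatives appearing in $\norm{\cdot}_{C^1(\Omega)}$ include $\partial_t v$ together with $\partial_{x_1}v,\dots,\partial_{x_n}v$, so
\[
\norm{v}_{C^1([0,T]\times X)} = \norm{v}_{C^0([0,T]\times X)} + \norm{\partial_t v}_{C^0([0,T]\times X)} + \sum_{i=1}^n \norm{\partial_{x_i} v}_{C^0([0,T]\times X)}.
\]
Since $[0,T]\times X$ is compact by A1 and $v\in C^1$, each of these $C^0$ norms is a finite supremum, so the right-hand side is well defined.

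Next I would estimate the Lie-derivative term. Fix $(t,x)\in[0,T]\times X$ and $u\in[-1,1]^n$, and write $\Lie_u v(t,x)=\partial_t v(t,x)+\sum_{i=1}^n u_i\,\partial_{x_i}v(t,x)$. The triangle inequality together with the bound $\abs{u_i}\le 1$ coming from A4 (the box control set $U=[-1,1]^n$) gives
\[
\abs{\Lie_u v(t,x)} \;\le\; \abs{\partial_t v(t,x)} + \sum_{i=1}^n \abs{\partial_{x_i} v(t,x)}.
\]
Taking the supremum over $(t,x,u)\in[0,T]\times X\times[-1,1]^n$ on the left, and then bounding the supremum of a sum by the sum of suprema on the right, yields
\[
\norm{\Lie_u v}_{C^0([0,T]\times X\times[-1,1]^n)} \;\le\; \norm{\partial_t v}_{C^0([0,T]\times X)} + \sum_{i=1}^n \norm{\partial_{x_i} v}_{C^0([0,T]\times X)}.
\]
Finally I would add $\norm{v}_{C^0([0,T]\times X)}$ to both sides and recognize the right-hand side as $\norm{v}_{C^1([0,T]\times X)}$ via the first display, which is exactly the asserted inequality.

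There is essentially no serious obstacle here; the argument is a one-line triangle inequality plus a coordinatewise bound. The only two points needing a small amount of care are (i) correctly identifying that, because time $t$ is one of the coordinates of the domain $[0,T]\times X$, the $C^1$ norm already contains the $\norm{\partial_t v}_{C^0}$ contribution that the Lie derivative produces, and (ii) invoking compactness (A1) and $C^1$-regularity of $v$ so that every $C^0$ norm appearing in the chain of inequalities is finite and the manipulations are legitimate.
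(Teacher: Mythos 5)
Your argument is correct and follows essentially the same route as the paper's own proof: expand the $C^1$ norm as the sum of $C^0$ norms of $v$, $\partial_t v$, and $\partial_{x_i}v$; use $\abs{u_i}\le 1$ to dominate each $u_i\partial_{x_i}v$ term; and apply the triangle inequality to bound $\norm{\Lie_u v}_{C^0}$. Your presentation is a touch more careful about where the supremum over $u$ is taken, but the idea is identical.
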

\begin{proof}
    
    The definition of the $C^1$ norm is
    \begin{align}
        \norm{v}_{C^1([0, T] \times X)} &= \norm{v}_{C^0([0, T] \times X)} + \norm{\partial_t v}_{C^0([0, T] \times X)} + \sum_{i=1}^n \norm{\partial_i v}_{C^0([0, T] \times X)}. \\
        \intertext{Given that $u_i \in [-1, 1]$ from assumption A4, the following ordering relations are obeyed}
        \norm{v}_{C^1([0, T] \times X)}&\geq \norm{v}_{C^0([0, T] \times X)}+ \norm{\partial_t v}_{C^0([0, T] \times X)}  + \sum_{i=1}^n \norm{u_i \partial_i v(t, x)}_{C^0([0, T] \times X \times [-1, 1]^n)}.  \\
        \intertext{The final relation holds given that $\norm{a+b} \leq \norm{a} + \norm{b}$ for all norms}
        &\geq \norm{v}_{C^0([0, T] \times X)} + \norm{\Lie_u v}_{C^0([0, T] \times X)}.
    \end{align}
    The result follows.
    \end{proof}

\begin{thm}
\label{thm:poly_v}
    Under assumptions A1-A3, there exists a polynomial $V(t, x) \in \R[t, x]$ that solves \eqref{eq:altern_u_0_strict} and certifies path-disconnectedness.
\end{thm}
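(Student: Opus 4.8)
The plan is to take the $C^1$ barrier function whose existence is already guaranteed by the earlier results, and then perturb it into a polynomial while preserving the strict inequalities, which are stable under small $C^1$ perturbations.

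\textbf{Step 1 (a strict $C^1$ certificate and its margins).} I would first record that, since $(X_0,X_1)$ are path-disconnected in $X$, no admissible trajectory connects them, so LP \eqref{eq:connect_meas} is infeasible; hence the strong-alternative theorem of Section \ref{sec:path_disconnected}, together with Lemmas \ref{lem:altern_u_0} and \ref{lem:altern_u_0_strict}, yields a $v \in C^1([0,T]\times X)$ solving the strict program \eqref{eq:altern_u_0_strict}. By compactness of $X_0, X_1, X, U$ (A1, A2) and continuity, the three margins
\[
\epsilon_0 := \min_{x\in X_0} v(0,x), \qquad \epsilon_1 := -\max_{x\in X_1} v(T,x), \qquad \epsilon_L := \min_{(t,x,u)\in[0,T]\times X\times U}\Lie_u v(t,x)
\]
are attained and strictly positive.

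\textbf{Step 2 ($C^1$ polynomial approximation).} Next I would approximate $v$ by a polynomial in the $C^1$ norm. Since $C^1$ density of polynomials is cleanest on a box, I would first extend $v$ to a $C^1$ function $\bar v$ on a closed box $B \supseteq [0,T]\times X$ (a $C^1$ Whitney/Tietze-type extension, possible because $[0,T]\times X$ is compact), so that $\bar v$ agrees with $v$ on $[0,T]\times X$; then mollify and Taylor-truncate $\bar v$ — equivalently, use density of $\R[t,x]$ in $C^1(B)$ — to produce, for any prescribed $\delta>0$, a polynomial $V\in\R[t,x]$ with $\norm{V-\bar v}_{C^1(B)}<\delta$. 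Restricting to $[0,T]\times X$ only decreases this norm, so $\norm{V-v}_{C^1([0,T]\times X)}<\delta$.

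\textbf{Step 3 (transferring the strict inequalities).} Finally I would fix $\delta<\min\{\epsilon_0,\epsilon_1,\epsilon_L/C\}$, where $C:=\max\{1,\max_{u\in U}\max_i\abs{u_i}\}<\infty$ by compactness of $U$ (with $C=1$ when $U=[-1,1]^n$, which is exactly the content of Lemma \ref{lem:c1_lie_u}). Then on $X_0$ one has $V(0,x)\ge v(0,x)-\norm{V-v}_{C^0}>\epsilon_0-\delta\ge 0$; on $X_1$ one has $V(T,x)\le v(T,x)+\norm{V-v}_{C^0}<-\epsilon_1+\delta<0$; and using the bound $\norm{\Lie_u(V-v)}_{C^0([0,T]\times X\times U)}\le C\,\norm{V-v}_{C^1([0,T]\times X)}$ (Lemma \ref{lem:c1_lie_u} for the box case, and the elementary triangle-inequality estimate for general compact $U$) one gets $\Lie_u V\ge\epsilon_L-C\delta>0$ on $[0,T]\times X\times U$. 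Hence $V$ solves \eqref{eq:altern_u_0_strict}, and being a polynomial it certifies path-disconnectedness for the prescribed horizon $T\ge T_X$, as in the discussion following \eqref{eq:altern_u}.

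The hard part is Step 2: simultaneously approximating a $C^1$ function and its gradient by polynomials on the compact — and possibly non-convex or lower-dimensional — set $[0,T]\times X$. Passing through a $C^1$ extension to a full-dimensional box reduces this to standard polynomial density in $C^1$; the one nontrivial input is the existence of such a $C^1$ extension for the sets $X$ permitted by A1, which follows from a Whitney-type extension theorem. Everything after that — the three positive margins and the bound relating $\norm{V-v}_{C^1}$ to $\norm{\Lie_u(V-v)}_{C^0}$ supplied by Lemma \ref{lem:c1_lie_u} — is routine bookkeeping.
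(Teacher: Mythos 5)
Your proof is correct and follows the same overall strategy as the paper: obtain a strict $C^1$ barrier from the Farkas alternative and the strict-reformulation lemmas, approximate it in the $C^1$ norm by a polynomial (the paper cites Theorem 1.1.2 of \cite{llavona1986approximation} for this density, where you make the extension-to-a-box step explicit), and use Lemma \ref{lem:c1_lie_u} to bound the Lie-derivative error by the $C^1$ approximation error. The one genuine difference is how the margins are closed: you fix $\delta$ below the three strict margins $\epsilon_0$, $\epsilon_1$, $\epsilon_L$ of the $C^1$ certificate and take the polynomial approximant itself as $V$, whereas the paper defines $V(t,x) = w(t,x) - \delta(1-t/(2T))$, subtracting a time-dependent affine shift from the approximant $w$ to manufacture fresh margins on the terminal and Lie constraints (mirroring the device in Lemma \ref{lem:altern_u_0_strict}). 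Your version is slightly leaner because it exploits the strictness of all three constraints already guaranteed by \eqref{eq:altern_u_0_strict}; the paper's shift is a bit redundant at this point in the argument but would also work starting from the half-strict program \eqref{eq:altern_u_0}. Both are valid, and the final bookkeeping ($\delta < \min\{\epsilon_0, \epsilon_1, \epsilon_L/C\}$ with $C=1$ for the box $U$) is correct.
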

\begin{proof}
This proof uses strategies from Theorem 2.3 of \cite{fantuzzi2020bounding} and Theorem 4.1 of \cite{miller2023robustcounterpart}.  Let $\tilde{v}$ be a solution to \eqref{eq:altern_u_0_strict}, and define positive tolerances $\eta, \delta > 0$. Theorem 1.1.2 of \cite{llavona1986approximation} may be applied to find a polynomial $w(t, x)$ such that $\norm{w(t,x)-\tilde{v}(t,x)}_{C^1([0, T] \times X)} \leq \eta$. 
By Lemma \ref{lem:c1_lie_u}, this approximation implies that $\forall (t, x) \in [0, T] \times X$:
\begin{subequations}
\begin{align}
    \tilde{v}(t, x) - \eta &\leq w(t, x) \leq \tilde{v}(t, x) +\eta \\
    \Lie_u\tilde{v}(t, x) - \eta & \leq \Lie_u w(t, x) \leq \Lie_u \tilde{v}(t, x) +\eta.
\end{align}
\end{subequations}
Define the polynomial
\begin{align}
    V(t, x) = w(t, x) - \delta(1-t/(2T)).
\end{align}
Similar substitutions to \eqref{eq:tilde_v_subs} may be performed to acquire
\begin{subequations}
    \begin{align}
        V(T, x) &= w(T, x) - \delta/2 & & \leq \tilde{v}(T, x) - \delta/2 + \eta \\
        V(0, x) &= w(0, x) - \delta & & \geq \tilde{v}(0, x)  - \delta - \eta  \\
        \Lie_u V(t, x) &= \Lie_u w(t, x) + \delta/(2T) & & \geq \Lie_u \tilde{v}(t, x) + \delta/(2T) - \eta .
    \end{align}
\end{subequations}
Satisfaction of the following constraints on $(\delta, \eta) > 0$  proves this theorem, certifying the existence of a polynomial $V$ that fulfills the requirements of \eqref{eq:altern_u_0_strict}:
\begin{align}    
\label{eq:eta_delta_cons}
     \eta + \delta & < \min_{x \in X_0} \tilde{v}(0, x) & \eta &< \min(1, 1/T) \delta/2.     
\end{align}
An admissible choice of $(\delta, \eta)$ that satisfies \eqref{eq:eta_delta_cons} is
\begin{align}
\eta^* &=  (\delta/4) \min(1, 1/T)) & \delta^* = \left[\min_{x \in X_0} \tilde{v}(0, x)\right] / (2 + \min(1, 1/T)/2),
\end{align}
thus proving the theorem.

\end{proof}

\begin{thm}
\label{eq:poly_zeta}
        For any valid  $\tilde{v}(t,x)$ satisfying \eqref{eq:altern_u_0_strict_lie_box}, the multipliers $\zeta$ from \eqref{eq:altern_u_box_con_zeta} may be chosen to be polynomial (under A1 and A4).
\end{thm}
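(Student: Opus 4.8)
The plan is to reduce the claim to a \emph{one-sided} (from above) polynomial approximation of the continuous functions $\abs{\partial_{x_i}\tilde v}$ on $K:=[0,T]\times X$. Note first that the equality constraint $\zeta_i^{+}-\zeta_i^{-}=\partial_{x_i}\tilde v$ in \eqref{eq:altern_u_box_con} can only be satisfied by polynomial $\zeta_i^\pm$ when $\partial_{x_i}\tilde v$ is itself a polynomial; hence the statement is really about the case $\tilde v\in\R[t,x]$, which is precisely the situation of Theorem~\ref{thm:sos_disconnected_box}, where $\tilde v$ is the polynomial $V$ produced by Theorem~\ref{thm:poly_v}. I therefore take $\tilde v$ polynomial and write $g_i:=\partial_{x_i}\tilde v\in\R[t,x]$.

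First I would extract a quantitative margin from the strict box--Lie inequality: evaluating \eqref{eq:altern_u_0_strict_lie_box} at the worst-case input $u_i=-\mathrm{sgn}\,g_i(t,x)\in[-1,1]$ gives $\partial_t\tilde v-\sum_{i=1}^n\abs{g_i}>0$ on $K$, and since $\tilde v\in C^1$ and $K$ is compact (A1) this continuous function attains a strictly positive minimum $m>0$. Next I would invoke the Weierstrass approximation theorem to choose polynomials $p_i$ with $\norm{p_i-\abs{g_i}}_{C^0(K)}<m/(4n)$ and set $h_i:=p_i+m/(4n)\in\R[t,x]$, so that $h_i>\abs{g_i}$ and $0<h_i-\abs{g_i}<m/(2n)$ on $K$; then define the polynomials $\zeta_i^{+}:=\tfrac12(h_i+g_i)$ and $\zeta_i^{-}:=\tfrac12(h_i-g_i)$. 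Because $h_i>\abs{g_i}\ge\pm g_i$ on $K$, both are strictly positive on $K$, hence lie in $C_+([0,T]\times X)$; moreover $\zeta_i^{+}-\zeta_i^{-}=g_i=\partial_{x_i}\tilde v$ and $\zeta_i^{+}+\zeta_i^{-}=h_i$.

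It then remains to verify the relaxed Lie inequality \eqref{eq:altern_u_box_lie}: on $K$ one has $\partial_t\tilde v-\sum_{i}(\zeta_i^{+}+\zeta_i^{-})=\bigl(\partial_t\tilde v-\sum_i\abs{g_i}\bigr)-\sum_i(h_i-\abs{g_i})>m-n\cdot\tfrac{m}{2n}=\tfrac m2>0$, so $(\tilde v,\zeta^{+},\zeta^{-})$ solves \eqref{eq:altern_u_box_con} with polynomial multipliers, which proves the theorem. As a byproduct the $\zeta_i^\pm$ come out strictly positive on $K$, which is convenient when they are later fed into the weighted-SOS cone in \eqref{eq:altern_u_box_sos_zeta}.

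The argument is essentially routine, so I do not expect a genuine obstacle; the only points requiring attention are that the polynomial approximation of $\abs{g_i}$ must be made one-sided (handled by the small upward shift $m/(4n)$) and that the magnitude of this shift must be tied to the Lie-margin $m$ so that the strict inequality survives the substitution — both of which use nothing beyond compactness of $K$ and continuity of $\tilde v$. The one structural point worth stating is that $\tilde v$ must have polynomial spatial gradient for the equality constraint to be realizable with polynomial $\zeta$, which is automatic in the pipeline since $\tilde v$ is the polynomial certificate of Theorem~\ref{thm:poly_v}.
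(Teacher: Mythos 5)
Your proof is correct, but it takes a genuinely different route from the paper, which simply cites Theorem 4.3 of \cite{miller2023robustcounterpart} without exposing an argument. Your self-contained construction is sound: extracting the scalar margin $m>0$ of the box-Lie inequality via compactness at the worst-case input $u_i=-\mathrm{sgn}\,g_i$, Weierstrass-approximating $\abs{g_i}$ from above by polynomials $h_i$ with $0<h_i-\abs{g_i}<m/(2n)$, and setting $\zeta_i^\pm=\tfrac12(h_i\pm g_i)$ yields strictly positive polynomial multipliers with $\zeta_i^+-\zeta_i^-=g_i$ identically, and the input-eliminated Lie constraint \eqref{eq:altern_u_box_lie} survives with margin $m/2$. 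Your preliminary observation that $\tilde v$ must already be polynomial for the equality constraint to be exactly realizable with polynomial $\zeta$ is also right, and is consistent with how this theorem is invoked inside the proof of Theorem \ref{thm:sos_disconnected_box}, where $\tilde v$ is the polynomial certificate furnished by Theorem \ref{thm:poly_v}. What your argument buys over the paper's is a short, elementary, quantitative proof that the reader can verify without consulting the external reference; what the paper's citation buys is brevity and a uniform treatment alongside the other approximation lemmas drawn from \cite{miller2023robustcounterpart} (which presumably also cover compact convex $U$ beyond the box case). As you note, the strict positivity of the resulting $\zeta_i^\pm$ on $[0,T]\times X$ is a convenient by-product for the weighted-SOS membership in \eqref{eq:altern_u_box_sos_zeta}.
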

\begin{proof}
    Polynomial approximability of $\zeta$ holds by  
 Theorem 4.3 of \cite{miller2023robustcounterpart} with respect to the set $U =  [-1, 1]^n$.
\end{proof}

\bibliographystyle{IEEEtran}
\bibliography{references}

\end{document}